\newtheorem{theorem}{Theorem}[section]
\newtheorem{lemma}[theorem]{Lemma}
\theoremstyle{definition}
\newtheorem{definition}[theorem]{Definition}
\newtheorem{corollary}[theorem]{Corollary}
\newtheorem{proposition}[theorem]{Proposition}
\theoremstyle{remark}
\newtheorem{remark}[theorem]{Remark}
\numberwithin{equation}{section}
\newcounter{properties}
\DeclareMathOperator{\tr}{tr}
\DeclareMathOperator{\dv}{div}
\DeclareMathOperator{\grad}{grad}
\DeclareMathOperator{\Ric}{Ric}
\DeclareMathOperator{\inj}{Inj\,}
\DeclareMathOperator{\vol}{vol\,}
\DeclareMathOperator{\Sec}{Sec\,}
\begin{document}

\title[Convexity Properties of Harmonic Functions]{Convexity Properties of Harmonic Functions on Parameterized Families of Hypersurfaces}


\author[S.~M.~Berge]{Stine Marie Berge}
\address{Department of Mathematical Sciences, Norwegian University of Science and Technology, 7491 Trondheim, Norway}
\email{stine.m.berge@ntnu.no}


\subjclass[2010]{53B20,35J05,31B05}

\date{}

\dedicatory{}

\commby{}

\begin{abstract}
   It is known that the $L^{2}$-norms of a harmonic function over spheres satisfies some convexity inequality strongly linked to the Almgren's frequency function. We examine the $L^{2}$-norms of harmonic functions over a wide class of evolving hypersurfaces. More precisely, we consider compact level sets of smooth regular functions and obtain a differential inequality for the $L^{2}$-norms of harmonic functions over these hypersurfaces. To illustrate our result, we consider ellipses with constant eccentricity and growing tori in $\mathbf{R}^3.$ Moreover, we give a new proof of the convexity result for harmonic functions on a Riemannian manifold when integrating over spheres. The inequality we obtain for the case of positively curved Riemannian manifolds with non-constant curvature is slightly better than the one previously known.\end{abstract}

\maketitle

\section{Introduction}
Since the paper by Almgren \cite{Al79}, the frequency function have been intensively used to study harmonic functions in $\mathbf{R}^n$ and, more generally, solutions to second order elliptic equations. For a harmonic function $h$ on $\mathbf{R}^n$ we let $H(t)$ denote the $L^2$-norm of $h$ over the sphere of radius $t$. In \cite{Ag66}, and later in  \cite{Al79}, it was shown that the function $H$ is \textit{geometrically convex}, i.e. 
\begin{equation}
   \label{eq:19}
H\left( r^{\alpha}s^{1-\alpha} \right)\le H\left( r\right)^{\alpha} H\left( s \right)^{1-\alpha}, \qquad 0 \leq \alpha \leq 1, \quad r,s > 0.
\end{equation}
Inequality \eqref{eq:19} is equivalent to the statement that the \textit{frequency function} $$N(t) = \frac{tH'(t)}{H(t)},\quad t>0$$ is increasing. The notion of frequency function was generalized to solutions of elliptic operators of divergence form by Garafalo and Lin in \cite{Ga86} and was shown to be almost increasing for $t<t_0$. They further used the result to show that the squares of solutions of the elliptic equations are Muckenhoupt weights on the ball $B_{R}$ with radius $R > 0$. \par 
In the paper of Mangoubi \cite{Ma13}, a more explicit convexity result on Riemannian manifolds was obtained by using comparison geometry. Using this result and extending eigenfunctions to harmonic functions, Mangoubi gave a new proof that a solution $u$ to  $\dv\left( \grad u \right)=-k^{2}u$ satisfies 
\begin{equation}
   \label{eq:17}
   \max_{B_{r}\left( p \right)} |u|\le C_1e^{C_2rk}\left(\max_{B_{2r}\left( p \right)} |u|\right)^{\alpha}\left(\max_{B_{r/2}\left( p \right)} |u|\right)^{1-\alpha}.
\end{equation}
In \eqref{eq:17} the positive constants $C_1$, $C_2$ and $0<\alpha<1$ only depend on the dimension and curvature of the Riemannian manifold. Inequality \eqref{eq:17} was first shown by Donnelly and Fefferman in \cite{DF88}. 

The main aim of this work is to study the $L^{2}$-norm of harmonic functions over families of surfaces, generalizing the geometric convexity inequality \eqref{eq:19}. Let $h$ be a harmonic function on a domain $\Omega$ in a Riemannian manifold $\left( \mathcal{M},\mathbf{g} \right)$ and fix a point $p \in \mathcal{M}$. Consider for $R > 0$ a smooth function $f:\Omega\to\left[ 0,R \right)$ that is regular and have compact level surfaces. Let $$H\left( t \right)=\int_{f^{-1}\left( t \right)}h^{2}\left|\grad f\right|\sigma_t$$ be the squared $L^{2}$-norm of $h$ over the level surface $f^{-1}\left( t \right)$ with the weight $\left|\grad f\right|$.\par
Our main theorem states that $H$ satisfies an inequality of the type 
\begin{equation}
\label{eq:27}
 \left( \log H\left( t \right) \right)''+\tau\left( t \right)\left( \log H\left( t \right) \right)'\ge \rho\left( t \right),  
\end{equation}
 where $\tau$ and $\rho$ are independent of $h$. In fact, the functions $\tau$ and $\rho$ only depend on explicit estimates on the derivatives of $f$ and are given in Theorem \ref{thm:1}. These kinds of inequalities when integrated imply that $H$ satisfies a variant of Inequality \eqref{eq:19}. In particular, when $f$ is the Riemannian distance function from a fixed point, we give a new proof of \cite[Theorem 2.2]{Ma13}. For the case when the curvature is positive we obtain a slight improvement of his inequality, see Section \ref{sec:sphere}.\par
Next we illustrate our result by considering \textit{$1$-homogeneous functions}, that is, functions that satisfy $f\left( t\mathbf{x} \right)=tf\left( \mathbf{x} \right)$ for $t>0$. A way to construct $1$-homogeneous functions is to choose a compact and star convex (with respect to the origin) set $R$ with smooth boundary $S \subset \mathbf{R}^{n} \setminus \{0\}$. Define a function $f$ by $f\left( \mathbf{x} \right)=1$ for all $\mathbf{x}\in S$ and extend this to a $1$-homogeneous function on the whole $\mathbf{R}^{n} \setminus \{0\}.$ In this case, we will show that there exist constants $A$ and $B$ such that the function $H$ satisfies 
\begin{equation}\label{eq:24}
   \left( \log H\left( t \right) \right)''+\frac{A}{t}\left( \log H\left( t \right) \right)'\ge -\frac{B}{t^{2}}.
\end{equation}
For the special case where $S$ is an ellipsoid in $\mathbf{R}^{n}$ we find in Section \ref{ell} the explicit values of $A$ and $B$. \par
To give an example of level surfaces not diffeomorphic to the sphere we take the distance function of the submanifold 
$$S^{k}\times\left\{ 0 \right\}\times\cdots\times \left\{ 0 \right\}\subset \mathbf{R}^{n},\textrm{  where  }S^{k}=\left\{ \mathbf{x} \in \mathbf{R}^{k+1}:\sqrt{x^2_1+\cdots+x^{2}_{k+1}} = 1\right\}.$$ In particular, whenever $k=1$ and $n=3$ the level surfaces form a family of tori. Let $f\left( \mathbf{x} \right)=\textrm{dist}\left( \mathbf{x},S^{k} \right)$, and $H$ be as above. Then for a fixed $0<\varepsilon<1$ we have that for all $t<1-\varepsilon$ the function $H$ satisfies \eqref{eq:27} for some $A$ and $B$.
Lastly, in Section \ref{sec:eig} we show that if $\dv \left( \grad u \right)=k^{2}u$ then the spherical $L^{2}$-norm of $u$ satisfies \eqref{eq:19}.

\subsection*{Acknowledgment}
The author would like to thank Eugenia Malinnikova for her guidance and Dan Mangoubi for his insightful suggestions. The author was partially supported by the BFS/TFS project Pure Mathematics in Norway.
\section{The Convexity Result}
\subsection{Prerequisites}\label{sec:2}
In this article $\left( \mathcal{M},\mathbf{g} \right)$ will always denote a smooth Riemannian manifold. The volume density and its respective divergence will be denoted by $\vol$ and $\dv$. We will use the notation $\nabla$ to denote the Levi-Civita connection, and define the Hessian of a function $h\in C^{\infty}\left( \mathcal{M} \right)$ by $$\nabla^{2}h\left( X,Y \right)= \nabla_{X}\nabla_{Y}h-\nabla_{\nabla_XY}h=\langle \nabla_{X}\grad h,Y\rangle,$$ where $X$ and $Y$ are vector fields and $\grad h$ denotes the gradient of the function $h$. The Laplace operator $\Delta$ is given by 
$$\Delta h=\dv\left( \grad h \right)=\tr_{\mathbf{g}}\nabla^{2}h\left( \times,\times \right),$$
where $\tr_{\mathbf{g}}$ denotes the trace with respect to the metric $\mathbf{g}.$

The idea of the proof of Theorem \ref{thm:1} is to emulate the proof of the well known special case (which is presented in details in Section \ref{sec:sphere}): Let $h:B_R\left( p \right)\subset \mathcal{M}\to \mathbf{R}$ be a harmonic function on the ball with radius $R$ centered at $p$ and define $$H(t)=\int_{S_t}h^{2}\sigma_t,$$ where $S_t$ is the geodesic sphere centered at $p$ with radius $t$ and $\sigma_t$ is its surface measure. In \cite{Ma13} it was shown that $H$ satisfies a convexity property, which in the case of constant curvature spaces is on the form 
$$\left( \log H\left( t \right) \right)''+\log\left( \sin_\mathcal{K}\left( t \right) \right)'\left( \log H\left( t \right) \right)'\ge -\left( n-1 \right)\mathcal{K}+\left( n-2 \right)\min\left( 0,\mathcal{K} \right),$$ where $\mathcal{K}$ is the sectional curvature and $\sin_{\mathcal{K}}\left( r \right)$ is the function defined by Equation \eqref{sin} in Section \ref{sec:sphere}.

Our goal is to obtain a similar inequality for other families of parameterized surfaces than geodesic spheres. Since an important step in \cite{Ma13} depends indirectly on the coarea formula, we will assume that this family of surfaces is given as the level surfaces of a Lipschitz function $f:\Omega\to \left[ 0,R \right),$ where $\Omega\subset \mathcal{M}$ is an open set. To ensure that the preimages $S_t=f^{-1}\left( t \right)$ are hypersurfaces for $t\in \left( 0,R \right)$, we will assume that every value in $\left( 0,R \right)$ is regular (see \cite[Theorem 5.12]{Le13}), meaning that $\left|\grad f\right|>0$ for all $x\in f^{-1}\left( 0,R \right).$ 
We will also need that the integral over the hypersurfaces are finite. Thus we assume that that the surfaces $S_{t}$ are closed manifolds, that is, compact manifolds without boundary. Finally, to be able to use the divergence theorem on the surfaces $S_t$ we will assume that $R_t=f^{-1}\left( \left[ 0,t \right) \right)$ is open and compactly embedded in $\Omega$ for all $t\in \left( 0,R \right)$ and $S_t$ is given as the boundary of $R_{t}.$
\begin{definition}
We say that the function $f:\Omega \to [0,R)$ is a \textit{parameterizing function} if it satisfies the following properties;
\begin{enumerate}
   \item $f$ is Lipschitz continuous in $\Omega$ and smooth on $f^{-1}\left( \left( 0,R \right) \right)$,\label{pro:1}
   \item all values in $\left( 0,R \right)$ are regular values of $f$, and $S_t=f^{-1}\left( t \right)$ are closed hypersurfaces in $\mathcal{M},$ \label{pro:2}
   \item $R_t=f^{-1}\left( [0,t) \right)$ are compactly embedded submanifolds of $\Omega$ with boundary $S_{t}$. Furthermore, we need that $\frac{1}{\left|\grad f\right|}$ is an integrable function on $R_{t}$ for all $t$.\label{pro:3}
\setcounter{properties}{\value{enumi}}
\end{enumerate}
\end{definition}
Under the assumptions on the function $f$ we can formulate the coarea formula on manifolds.
\begin{lemma}[{\cite[Theorem 3.1]{Fe59}}]
   \label{thm:2}
   Let $f:\Omega\subset \mathcal{M}\to \mathbf{R}$ be a parameterizing function. Define $S_{t}=f^{-1}\left( t \right)$ and let $\sigma_t$ be the area measure on $S_{t}$. Then for all integrable functions $\varphi:\Omega\to \mathbf{R}^n$ we have that 
   $$\int_{R_t}\varphi\vol=\int_{0}^{t}\int_{S_{s}}\frac{\varphi}{|\grad f|}\sigma_s ds.$$
\end{lemma}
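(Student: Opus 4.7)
The strategy is the classical one for the coarea formula: establish it locally in a chart where $f$ is straightened to a coordinate function, then globalize via a partition of unity, and finally take a limit to handle the Lipschitz boundary $\{f = 0\}$.

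For the local step, I would fix $p \in f^{-1}((0,R))$. Since $\grad f(p) \neq 0$, the implicit function theorem produces a chart $(y_1,\ldots,y_{n-1},s)$ on a neighborhood of $p$ in which $s = f$ and each slice $\{s = t\}$ is an open piece of $S_t$. In these coordinates the metric decomposes into blocks as
\begin{equation*}
\mathbf{g} = \begin{pmatrix} \tilde{g} & b \\ b^{\top} & c \end{pmatrix},
\end{equation*}
with $\tilde{g}$ the induced metric on the level slices. Since $\partial_s f = 1$ and $\partial_{y_i} f = 0$, the identity $|\grad f|^{2} = \mathbf{g}^{nn}$ together with the Schur-complement formulas
\begin{equation*}
\mathbf{g}^{nn} = (c - b^{\top} \tilde{g}^{-1} b)^{-1}, \qquad \det \mathbf{g} = (\det \tilde{g})(c - b^{\top} \tilde{g}^{-1} b)
\end{equation*}
yield $\sqrt{\det \mathbf{g}} = \sqrt{\det \tilde{g}}/|\grad f|$. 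Hence $\vol = |\grad f|^{-1}\,\sigma_s\,ds$ on the chart, and Fubini's theorem delivers the coarea identity locally.

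To globalize, I would fix $0 < \epsilon < t$ and observe that $\overline{f^{-1}([\epsilon,t])}$ is compact in $\Omega$ by property (3). Cover it by finitely many adapted charts, choose a subordinate smooth partition of unity $\{\chi_j\}$, and sum the local identities applied to each $\chi_j \varphi$ to obtain
\begin{equation*}
\int_{f^{-1}([\epsilon,t])} \varphi\, \vol = \int_{\epsilon}^{t} \int_{S_{s}} \frac{\varphi}{|\grad f|}\, \sigma_{s}\, ds.
\end{equation*}
Letting $\epsilon \to 0$, monotone convergence applied to $|\varphi|$ (and then linearity for signed $\varphi$) gives the left-hand side $\to \int_{R_t}\varphi\, \vol$ and the right-hand side $\to \int_0^t\int_{S_s} \varphi/|\grad f|\, \sigma_s\, ds$, which establishes the claim.

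The principal obstacle is the behavior near $\{f = 0\}$: there $f$ is only Lipschitz and $|\grad f|$ may degenerate, so no chart-based straightening is available. The integrability condition on $1/|\grad f|$ built into property (3) of the definition is precisely what ensures that the right-hand side is finite and that the $\epsilon \to 0$ limit does not produce pathological contributions from this singular locus.
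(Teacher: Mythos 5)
The paper does not prove this lemma; it states it and cites Federer (\cite[Theorem 3.1]{Fe59}), so there is no in-paper proof to compare against, and your task was effectively to supply one. Your argument is correct and is the standard differential-geometric slicing proof: straighten $f$ by the submersion theorem, use the block structure of $\mathbf{g}$ and the Schur-complement identities to obtain $\vol = |\grad f|^{-1}\sigma_s\,ds$ in the chart, globalize by a partition of unity over the compact set $f^{-1}([\epsilon,t])$, and pass to the limit. Federer's theorem is far more general --- it handles arbitrary Lipschitz maps between rectifiable sets, with critical points, via Hausdorff-measure machinery --- and the paper invokes it precisely to avoid rewriting this; you instead exploit the much stronger hypotheses of a parameterizing function (smoothness away from $f=0$, every value regular, compact slices, properness) to give a short self-contained proof. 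That is a legitimate and arguably more transparent route under these hypotheses.

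Two points would strengthen the write-up. First, the $\epsilon\to 0$ limit tacitly uses that $f^{-1}(0)$ has volume measure zero, since the integrals over $f^{-1}([\epsilon,t])$ converge to $\int_{f^{-1}((0,t])}$ rather than $\int_{R_t}$ a priori. This is in fact automatic from property~(3): $f$ is Lipschitz, so $\grad f=0$ a.e.\ on the level set $\{f=0\}$, and if that set had positive measure then $1/|\grad f|$ could not be integrable on $R_t$. It is worth saying this explicitly rather than leaving the singular locus at the level of a remark. Second, the statement as printed has a typo --- $\varphi:\Omega\to\mathbf{R}^n$ should read $\varphi:\Omega\to\mathbf{R}$ --- and your proof correctly treats $\varphi$ as scalar; also, "linearity for signed $\varphi$" is better phrased as dominated convergence with dominating function $|\varphi|/|\grad f| \in L^1$, or as applying monotone convergence separately to $\varphi^{\pm}$.
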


It will be beneficial for us to view $S_{t}$ as variations of hypersurfaces following the flow of the vector field $\frac{\grad f}{\left|\grad f\right|^{2}}.$ To make this precise, we formulate the following lemma.
\begin{lemma}
   \label{lem:8}
   Let $f:\Omega\to \left[0,R \right)$ be a parameterizing function. Let $\varphi_{t}$ denote the flow of the vector field $\frac{\grad f}{\left|\grad f\right|^{2}}$ and fix a value $t_0\in \left( 0,R \right)$. Then for all $t_0+t<R$ we have that $\varphi_{t}\left( S_{t_0} \right)=S_{t_0+t}$.
\end{lemma}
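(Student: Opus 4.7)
The plan is to show that $f$ is increased at unit rate along trajectories of the vector field $X := \grad f / |\grad f|^2$, and then to invoke a compactness argument to conclude that the flow exists long enough for the desired equality $\varphi_t(S_{t_0}) = S_{t_0+t}$ to make sense.

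The core computation is very short. For any point $x \in f^{-1}((0,R))$, the smoothness of $f$ on this set makes $X$ smooth, so a local flow $\varphi_s$ through $x$ exists. Along this flow,
\begin{equation*}
\frac{d}{ds} f\bigl(\varphi_s(x)\bigr) = df_{\varphi_s(x)}\bigl(X_{\varphi_s(x)}\bigr) = \left\langle \grad f,\, \frac{\grad f}{|\grad f|^{2}} \right\rangle_{\varphi_s(x)} = 1.
\end{equation*}
Hence $f(\varphi_s(x)) = f(x) + s$ as long as the flow stays inside $f^{-1}((0,R))$. Starting from $x \in S_{t_0}$ and running the flow forward for time $t$ with $t_0 + t < R$, we obtain $f(\varphi_t(x)) = t_0 + t$, i.e.\ $\varphi_t(x) \in S_{t_0+t}$, provided the flow survives until time $t$.

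To verify survival, I would use Property \ref{pro:3}: the set $\overline{R_{t_0+t}}$ is compact and contained in $\Omega$. By the identity $f(\varphi_s(x)) = t_0 + s$, the trajectory lies inside $f^{-1}([t_0, t_0+t]) \subset \overline{R_{t_0+t}} \setminus R_{t_0}$, which is a compact subset of $f^{-1}((0,R))$ where $X$ is smooth. A standard maximal-interval argument (escape from every compact set at a finite endpoint) then forces the flow to exist on the full interval $[0,t]$. This gives the inclusion $\varphi_t(S_{t_0}) \subseteq S_{t_0+t}$.

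For the reverse inclusion, I would apply the same argument to the vector field $-X$, whose flow is $\varphi_{-s}$: starting from $y \in S_{t_0+t}$ and flowing for time $t$, the trajectory stays inside $\overline{R_{t_0+t}} \setminus R_{t_0}$ and lands in $S_{t_0}$, so $\varphi_{-t}(S_{t_0+t}) \subseteq S_{t_0}$. Combined with the uniqueness of integral curves, this yields $\varphi_t(S_{t_0}) = S_{t_0+t}$. The only delicate point is really the compactness step, and Property \ref{pro:3} of parameterizing functions was imposed precisely so that this causes no trouble.
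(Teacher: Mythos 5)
Your proof takes essentially the same route as the paper's: compute $\frac{d}{ds}f(\varphi_s(x)) = \langle \grad f, \grad f/|\grad f|^2\rangle = 1$ along trajectories, integrate to get the forward inclusion $\varphi_t(S_{t_0}) \subseteq S_{t_0+t}$, and then reverse the flow to obtain equality. The one difference is that you explicitly justify long-time existence of the flow by trapping the trajectory inside the compact set $f^{-1}([t_0, t_0+t]) \subset f^{-1}((0,R))$ and invoking the escape lemma, using Property \ref{pro:3}; the paper simply integrates without addressing this point, so your version is slightly more complete where it matters.
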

\begin{proof}
Let $\gamma\left( t \right)$ be an integral curve of $\frac{\grad f }{\left|\grad f\right|^{2}}$ such that $\gamma\left( 0 \right)=x\in S_{t_{0}}.$ We need to show that $f\left( \gamma\left( t \right) \right)=t_0+t.$ Taking the derivative we obtain 
\begin{equation}
   \label{eq:9}
   \frac{d}{dt}f\left( \gamma\left( t \right) \right)=d f\left( \dot\gamma\left( t \right) \right)=\langle \grad f,\dot \gamma \left( t \right)\rangle=\left\langle \grad f,\frac{\grad f}{\left|\grad f\right|^{2}}\right\rangle=1.
\end{equation}
Integrating \eqref{eq:9} shows that $\varphi_t\left( S_{t_0} \right)\subset S_{t+t_0}$. To see that $\varphi_t\left( S_{t_0} \right) = S_{t+t_0}$ we pick $p \in S_{t+t_0}$. Since $\varphi_{t}$ is a diffeomorphism with $\varphi_{t}^{-1}=\varphi_{-t}$ the element $\varphi_{-t}(p)$ is in $S_{t_{0}}$ by repeating the argument above. Hence $\varphi_{t}(\varphi_{-t}(p)) = p$ and the result follows. 
\end{proof}

We remind the reader that the \textit{Lie derivative} of a $k$-form $\omega$ in the direction of the vector field $X$ evaluated at the point $p\in M$ is given by $$\left( \mathcal{L}_{X}\omega \right)_p=\lim_{t\to 0}\frac{\varphi_{t}^*\left( \omega_{\varphi_t\left( p \right)} \right)-\omega_p}{t},$$ where $\varphi_t^{*}$ denotes the pull back with respect to the flow $\varphi_t$ of $X$. We will use some standard properties of the Lie derivative acting on forms which can be found in \cite[p.~372]{Le13}.
   Let $X$ be a $C^1$ vector field and $\omega$ and $\nu$ be differentiable $k$- and $l$-forms, respectively. Then 
   \begin{equation}
      \label{lem:9a} \mathcal{L}_X\left( \omega \wedge \nu\right)=\left( \mathcal{L}_X \omega \right)\wedge \nu+\omega\wedge\left( \mathcal{L}_{X}\nu \right),
   \end{equation}
   \begin{equation}
      \label{lem:9b}\text{\textit{Cartan's Magic Formula:}}\quad \mathcal{L}_{X}\omega=\iota_{X}d\omega+d\iota_{X}\omega,
   \end{equation}
   where $\iota$ denotes the interior product and $d$ is the exterior differential.
The Cartan's magic formula implies that for any function $f$ we have the formula
\begin{equation}
   \label{eq:12}
   \mathcal{L}_{fX}\omega=f\mathcal{L}_{X}\omega+df\wedge \left( \iota_{X}\omega \right),
\end{equation}
where we have used that $\iota_{fX}=f\iota_{X}$ and $d\left( f\omega \right)=df \wedge \omega+fd\omega.$
The reason for going from the level surfaces of a function to a variation of surfaces by using the flow point of view is to utilize the following differentiation theorem.
\begin{lemma}
   \label{lem:2}
   Let $\alpha$ be an $\left( n-1 \right)$-form and let $S$ be an oriented closed smooth hypersurface in $\mathcal{M}$. Denote by $X$ a vector field and denote by $\varphi_t$ the flow generated by $X.$ Then $$\frac{d}{dt}\int_{\varphi_t\left( S \right)}\alpha=\int_{\varphi_{t}\left( S \right)}\mathcal{L}_{X}\alpha,$$ where $\mathcal{L}_{X}$ denotes the Lie derivative with respect to $X.$
\end{lemma}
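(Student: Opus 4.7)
The plan is to reduce everything to the case $t=0$ using the group property of the flow, convert the integral over the moving hypersurface $\varphi_t(S)$ into an integral over the fixed hypersurface $S$ by pullback, and then differentiate under the integral using the definition of the Lie derivative.

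\textbf{Step 1: Pull back to a fixed hypersurface.} Since $\varphi_t$ is a diffeomorphism (for small $|t|$) isotopic to the identity, it preserves orientation, and the standard change-of-variables formula for integration of differential forms gives
$$\int_{\varphi_t(S)} \alpha = \int_S \varphi_t^* \alpha.$$
Because $S$ is a closed (hence compact) hypersurface, the $(n-1)$-form $\varphi_t^*\alpha$ varies smoothly in $t$ together with all its derivatives uniformly on $S$, so we can freely interchange differentiation in $t$ with integration over $S$.

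\textbf{Step 2: Differentiate the pullback in $t$.} Using the flow property $\varphi_{t_0+s}=\varphi_s\circ \varphi_{t_0}$ and the identity $(\varphi_s\circ\varphi_{t_0})^{*}=\varphi_{t_0}^{*}\circ \varphi_s^{*}$, I compute
$$\frac{d}{dt}\bigg|_{t=t_0}\varphi_t^{*}\alpha
=\lim_{s\to 0}\frac{\varphi_{t_0+s}^{*}\alpha-\varphi_{t_0}^{*}\alpha}{s}
=\varphi_{t_0}^{*}\lim_{s\to 0}\frac{\varphi_s^{*}\alpha-\alpha}{s}
=\varphi_{t_0}^{*}\mathcal{L}_{X}\alpha,$$
where the last equality is precisely the pointwise definition of the Lie derivative of $\alpha$ along $X$ recalled in the paper, and pulling the limit through $\varphi_{t_0}^{*}$ is justified because $\varphi_{t_0}^{*}$ is linear and continuous in the $C^\infty$ topology on forms.

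\textbf{Step 3: Pull back forward to $\varphi_t(S)$.} Combining the two previous steps and applying the change-of-variables formula once more gives
$$\frac{d}{dt}\int_{\varphi_t(S)}\alpha
=\int_S \frac{d}{dt}\varphi_t^{*}\alpha
=\int_S \varphi_{t}^{*}\mathcal{L}_X\alpha
=\int_{\varphi_t(S)}\mathcal{L}_X\alpha,$$
which is the claimed identity.

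The only conceptual step that requires any care is the interchange of limit and integral in Step 2 (and the analogous interchange of $d/dt$ and $\int_S$ in Step 3); both are routine because $S$ is compact, $X$ is smooth (so $\varphi_t$ is smooth jointly in $(t,p)$ on a neighborhood of $\{0\}\times S$), and thus all the relevant forms and their derivatives in $t$ are uniformly bounded on $S$. I do not expect any substantive obstacle.
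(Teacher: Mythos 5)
Your proof is correct and follows essentially the same route as the paper: both use the group property of the flow together with the change-of-variables formula for pullbacks of forms, and then interchange the difference quotient limit with integration over a compact hypersurface. The only cosmetic difference is that you pull everything back to the fixed surface $S$ and use $\frac{d}{dt}\varphi_t^*\alpha=\varphi_t^*\mathcal{L}_X\alpha$, whereas the paper keeps $\varphi_t(S)$ as the base and pulls back only from $\varphi_{t+h}(S)$; these are the same computation.
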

\begin{proof}
   By using the definition of $\frac{d}{dt}$ we get 
   \begin{align*}
      \frac{d}{dt}\int_{\varphi_{t}\left( S \right)}\alpha&=\lim_{h\to 0}\frac{\int_{\varphi_{t+h}\left( S \right)}\alpha-\int_{\varphi_{t}\left( S \right)}\alpha}{h}\\
      &=\lim_{h\to 0}\frac{\int_{\varphi_{t}\left( S \right)}\varphi^*_{h}\alpha-\int_{\varphi_{t}\left( S \right)}\alpha}{h}\\
      &=\int_{\varphi_{t}\left( S \right)}\lim_{h\to 0}\frac{\varphi^*_{h}\alpha-\alpha}{h}\\
      &=\int_{\varphi_{t}\left( S \right)}\mathcal{L}_{X}\alpha.
   \end{align*}
We refer the reader to \cite[p.139]{Fr12} where the result is proved for more general variations of submanifolds.
\end{proof}

\subsection{The Main Theorem}\label{sec:main}
Let $h:\Omega\to \mathbf{R}$ be a harmonic function where $\Omega \subset \mathcal{M}$ is an open set and let $f:\Omega\to \left[ 0,R \right)$ be a parametrization function from Section \ref{sec:2}. Define the function $$H\left( t \right)=\int_{S_{t}}h^{2}\left|\grad f\right|\sigma_t,$$ where $S_{t}=f^{-1}\left( t \right)$ and $\sigma_t$ is its surface measure. The goal of this section is to show that $H$ satisfies a convexity property.\par
We will need the following version of a result of H\"ormander, \cite[Theorem 1]{Ho18}: 
   Let $f$ be a parameterizing function and $S_{t}$ and $\sigma_t$ be as above. 
   Then there exists a function $K$ only depending on $f$ such that for any harmonic function $h,$
   \begin{equation}\label{eq:7}
   \int_{S_{t}}\frac{\left|\grad_{S_t}h\right|^{2}-h_{n}^{2}}{\left|\grad f\right|}\sigma_t\ge -K\left( t \right)\int_{R_{t}}\left|\grad h\right|^{2}\vol,
\end{equation}
   where $\grad_{S_{t}}h$ and $h_n$ denote the gradient with respect to $S_t$ and the unit normal derivative, respectively.
   Inequality \eqref{eq:7} is proved in the end of this section, see Lemma \ref{lem:4:proof}.
   The following theorem is the main result of the paper; it shows that for any harmonic function $h$ the $L^{2}$-norms $H$ satisfy some convexity inequality only depending on the function $f.$
\begin{theorem}
   \label{thm:1}
   Let $\left( \mathcal{M},\mathbf{g} \right)$ be a Riemannian manifold, and let the functions $h$, $f$ and $H$ be as described earlier in this section. Define the functions $m,\,M$ and $g$ such that:
   \begin{enumerate}
\setcounter{enumi}{\value{properties}}
      \item\label{eq:14} $m\left( t \right)\le \frac{\Delta f}{\left|\grad f\right|^{2}}\le M\left( t \right)$ on $S_{t}$, and
      \item\label{eq:13} $\left\langle \grad \left(\frac{\Delta f}{\left|\grad f\right|^{2}}\right),\frac{\grad f}{\left|\grad f\right|^{2}}\right\rangle\ge g\left( t \right)$ on $S_{t}.$ 
   \end{enumerate}
Then $H$ satisfies the growth estimate
\begin{align}\label{eq:5}
   H'\left( t \right)&=2\int_{S_{t}} hh_{n}\sigma_{t}+ \int_{S_{t}} h^{2}\frac{\Delta f }{\left|\grad f\right|^{2}} \left|\grad f\right|\sigma_{t}\\
   &\ge 2\int_{R_{t}}\left|\grad h\right|^{2}\vol+m\left( t \right)H\left( t \right).\nonumber
\end{align} 
Moreover, if $K$ is the function given in Inequality \eqref{eq:7} and $K\left( t \right)+M\left( t \right)\ge 0$ then 
\begin{equation}\label{eq:6}
   \left( \log H\left( t \right) \right)''+\left( K\left( t \right)+M\left( t \right) \right) \left( \log H\left( t \right) \right)' \ge g\left( t \right)+m\left( t \right) M\left( t \right)+m\left( t \right)K\left( t \right) .
\end{equation}
\end{theorem}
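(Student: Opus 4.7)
My plan is to derive both \eqref{eq:5} and \eqref{eq:6} by viewing $H(t)$ as the integral of an $(n-1)$-form along the flow of the vector field $X := \grad f/|\grad f|^2$, which by Lemma~\ref{lem:8} carries $S_{t_0}$ to $S_{t_0+t}$, and differentiating through Lemma~\ref{lem:2}. A local frame adapted to $\grad f$ gives the identifications $|\grad f|\sigma_t = \iota_{\grad f}\vol\big|_{S_t}$ and $\iota_X\vol\big|_{S_t} = \sigma_t/|\grad f|$, so $H(t) = \int_{S_t} h^2 \iota_{\grad f}\vol$.

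For the equality in \eqref{eq:5} I would expand $\mathcal{L}_X(h^2 \iota_{\grad f}\vol) = X(h^2)\iota_{\grad f}\vol + h^2 \mathcal{L}_X \iota_{\grad f}\vol$ using \eqref{lem:9a}. The first summand restricts to $2 h h_n \sigma_t$ since $X(h^2) = 2h h_n/|\grad f|$. For the second, Cartan's formula \eqref{lem:9b} gives $\mathcal{L}_X \iota_{\grad f}\vol = \iota_X d\iota_{\grad f}\vol + d\iota_X\iota_{\grad f}\vol$; the key simplification is that $\iota_X \iota_{\grad f}\vol \equiv 0$ because $X$ is parallel to $\grad f$, while $d\iota_{\grad f}\vol = (\Delta f)\vol$, so $\mathcal{L}_X \iota_{\grad f}\vol = (\Delta f)\iota_X \vol$. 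The lower bound in \eqref{eq:5} then follows from the divergence theorem applied to $\dv(h\grad h) = |\grad h|^2$ (using harmonicity) and the hypothesis $\Delta f/|\grad f|^2 \geq m(t)$.

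For \eqref{eq:6}, set $\psi := \Delta f/|\grad f|^2$, $D(t) := \int_{R_t}|\grad h|^2\vol$ and $E(t) := \int_{S_t} h^2 \psi |\grad f|\sigma_t$, so that $H' = 2D + E$. By the coarea formula (Lemma~\ref{thm:2}) one has $D'(t) = \int_{S_t} |\grad h|^2/|\grad f|\sigma_t$, and applying Lemma~\ref{lem:2} to $E$ with the same Lie-derivative expansion yields
\begin{equation*}
E'(t) = 2\int_{S_t} h h_n \psi\, \sigma_t + \int_{S_t} h^2 X(\psi)|\grad f|\sigma_t + \int_{S_t} h^2 \psi^2 |\grad f|\sigma_t.
\end{equation*}
The main ingredients for a lower bound on $H'' = 2D' + E'$ are then H\"ormander's inequality \eqref{eq:7} (giving $\int |\grad h|^2/|\grad f|\sigma_t \geq 2\int h_n^2/|\grad f|\sigma_t - K(t)D(t)$), the Cauchy--Schwarz inequalities $(\int h h_n \sigma_t)^2 \leq H \int h_n^2/|\grad f|\sigma_t$ and $E^2 \leq H \int h^2 \psi^2 |\grad f|\sigma_t$, and the pointwise bounds $m \leq \psi \leq M$, $X(\psi) \geq g$. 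Multiplying \eqref{eq:6} through by $H^2 > 0$, the target becomes $HH'' + (K+M) HH' - (H')^2 \geq H^2(g + mM + mK)$, which I expect to shake out upon substituting these estimates and reorganizing.

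The main technical obstacle will be the sign-indeterminate cross term $2\int_{S_t} h h_n \psi \sigma_t$ in $E'(t)$, since neither $hh_n$ nor $\psi$ is signed. My plan is to couple it with the two ``Cauchy--Schwarz reserves'' $H \int h_n^2/|\grad f|\sigma_t - (\int hh_n \sigma_t)^2 \geq 0$ and $H \int h^2\psi^2 |\grad f|\sigma_t - E^2 \geq 0$, and to invoke the hypothesis $K + M \geq 0$ together with $E \geq m H$ so that the residual $(K+M)(E - mH)H \geq 0$ can absorb any leftover negative part. Conceptually this expresses the positive-semidefiniteness of the $3 \times 3$ Gram matrix of $h\sqrt{|\grad f|},\ \psi h\sqrt{|\grad f|},\ h_n/\sqrt{|\grad f|}$ in $L^2(S_t)$, from which the desired quadratic combination can be extracted.
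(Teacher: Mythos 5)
Your setup and first derivative are exactly the paper's: same Lie-derivative computation (Lemma \ref{lem:3} is equivalent to your $d\iota_{\grad f}\vol = (\Delta f)\vol$ and $\iota_X\iota_{\grad f}\vol = 0$ derivation), same identification $H' = 2D + E$ with $D = \int_{R_t}|\grad h|^2\vol$, same second derivative $H'' = 2D' + E'$, and the same use of H\"ormander's inequality for $D'$. Up to there the proposal is correct and follows the paper's line.

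However, the final algebra as you have sketched it does not close, and the specific mechanism you propose for handling the cross term is wrong. Write $P := \int_{S_t}h_n^2/|\grad f|\sigma_t$, $F := \int_{S_t}h^2\psi^2|\grad f|\sigma_t$, $G_1 := \int_{S_t}hh_n\psi\sigma_t$. Substituting your estimates and subtracting the target, the inequality to prove reduces to showing
\[
   4HP + 2HG_1 + HF + 2MHD - (2D+E)^2 + (K+M)H(E - mH) \;\ge\; 0.
\]
If you now spend only the two ``separate'' reserves $HP \ge D^2$ and $HF \ge E^2$ together with $(K+M)H(E-mH)\ge0$, you are left needing $2MHD + 2HG_1 - 4DE \ge 0$, and this has no definite sign: for instance with $H = 2$, $D = 1$, $E = 3$, $G_1 = 0$, $M = 2$ it equals $-4$, while the left-hand side above is comfortably positive. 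The reason is that $-4DE$ is a genuine cross term in $(2D+E)^2$ and can only be cancelled by the off-diagonal Gram entry $G_1$; the two principal $2\times2$ minors alone never see $G_1$. Moreover, the term $(K+M)(E-mH)H \ge 0$ is an additive bonus, not a cushion: it cannot ``absorb'' an unbounded negative residual.

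What the paper actually does is split $4HP = 2HP + 2HP$, and spend the three Gram consequences
\[
   2HP \ge 2D^2, \qquad \tfrac12 H\left(4P + 4G_1 + F\right) \ge \tfrac12 (2D+E)^2 = \tfrac12 (H')^2, \qquad \tfrac12 HF \ge \tfrac12 E^2,
\]
where the middle one is the Cauchy--Schwarz inequality for the pair $h\sqrt{|\grad f|}$, $\left(2h_n/|\grad f| + h\psi\right)\sqrt{|\grad f|}$ (this is what you call the Gram determinant, and it is strictly stronger than the two principal minors you list). After these, the residual is exactly $-2DE + 2MHD + (K+M)(E-mH)H = 2D(MH - E) + (K+M)(E - mH)H$, and it is here --- and only here --- that two facts you did not mention enter: $D = \int_{R_t}|\grad h|^2\vol \ge 0$ (from the divergence theorem) and the \emph{upper} bound $E \le MH$ (from $\psi \le M$). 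These two make $2D(MH - E) \ge 0$, and the remaining term $(K+M)(E-mH)H \ge (K+M)mH^2 - (K+M)mH^2 = 0$ just records, via $E \ge mH$, the $m(K+M)H^2$ contribution to the right-hand side. So to repair the proposal: include the mixed Cauchy--Schwarz inequality $H(H')'' \dots$, i.e.\ $H\left(4P + 4G_1 + F\right) \ge (H')^2$, as one of your reserves, and invoke $D\ge0$ together with $\psi \le M$ --- not $(K+M)(E-mH)H$ --- to cancel the problematic $-2DE$.
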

\begin{proof}
Using Lemma \ref{lem:2} to take the derivative of $H$ we obtain
\begin{align}\label{eq:8}
H'\left( t \right)&=\int_{S_{t}}\mathcal{L}_{\left( \grad f \right)/\left|\grad f\right|^{2}}\left( h^{2}\left|\grad f\right|\sigma_{t} \right)\\
&=\int_{S_{t}}\left\langle\frac{\grad f}{\left|\grad f\right|^{2}},\grad h^{2}\right\rangle\left|\grad f\right|\sigma_t\nonumber\\
&\quad + \int_{S_{t}}h^{2}\mathcal{L}_{\left( \grad f \right)/\left|\grad f\right|^{2}}\left( \left|\grad f\right|\sigma_{t} \right)\nonumber\\
&=2\int_{S_{t}}hh_n\sigma_t+ \int_{S_{t}}h^{2}\mathcal{L}_{\left( \grad f \right)/\left|\grad f\right|^{2}}\left( \left|\grad f\right|\sigma_{t} \right)\nonumber
\end{align}

The following lemma takes care of the last term in the above computation and finishes the proof of Equation \eqref{eq:5}. In the literature a version of the next lemma is known as the first variation of area for hypersurfaces (see \cite[p.~51]{CL06}).
   \begin{lemma}\label{lem:3}
   Using the notation above, we have that
\begin{align*}
   \mathcal{L}_{\grad f/|\grad f|^{2}}\left( \left|\grad f\right|\sigma_{t} \right)&=\frac{ \Delta f }{\left|\grad f\right|}\sigma_{t}\\
   &=\left( \left\langle\grad f,\grad \left|\grad f\right|\right\rangle/\left|\grad f\right|^{2}-nH_t \right)\sigma_t,
\end{align*}
where $H_t$ is the mean curvature of $S_{t}$ and $n$ is the dimension of $\mathcal{M}$.
\end{lemma}
\begin{proof}
   Using the properties of the Lie derivative given by Equation \eqref{lem:9a} and \eqref{lem:9b} together with the definition of the divergence we obtain 
   \begin{align*}
      \mathcal{L}_{\frac{\grad f}{|\grad f|^{2}}}\left(\left|\grad f\right| \sigma_{t} \right)&=\left\langle \frac{\grad f}{\left|\grad f\right|^{2}},\grad \left|\grad f\right|\right\rangle\sigma_{t}+\mathcal{L}_{\frac{\grad f}{|\grad f|}}\left( \sigma_{t} \right)\\ 
      &\qquad+\left|\grad f\right|d\left( 1/\left|\grad f\right| \right)\wedge \iota_{\frac{\grad f}{\left|\grad f\right|}}\left( \sigma_{t} \right)\\
      &=\left\langle \frac{\grad f}{\left|\grad f\right|^{2}},\grad \left|\grad f\right|\right\rangle\sigma_{t}+\mathcal{L}_{\frac{\grad f}{|\grad f|}}\left( \iota_{\frac{\grad f}{\left|\grad f\right|}} \vol \right)\\
      &=\left\langle \frac{\grad f}{\left|\grad f\right|^{2}},\grad \left|\grad f\right|\right\rangle\sigma_{t}+\iota_{\frac{\grad f}{\left|\grad f\right|}}\left( \mathcal{L}_{\frac{\grad f}{\left|\grad f\right|}}\vol \right)\\
   &=\left\langle \frac{\grad f}{\left|\grad f\right|^{2}},\grad \left|\grad f\right|\right\rangle\sigma_{t}+\dv\left(\frac{\grad f}{\left|\grad f\right|} \right)\sigma_t\\
   &=\frac{\Delta f }{\left|\grad f\right|}\sigma_{t}.
\end{align*}
\end{proof}
This concludes the proof of the Identity \eqref{eq:5}, note that the expression for $H'$ holds for an arbitrary function $h$ not necessarily harmonic. 

To prove the differential inequality \eqref{eq:6} we differentiate \eqref{eq:5}. We rewrite the first term by using the divergence formula and applying the coarea formula given in Lemma \ref{thm:2} when $\varphi\left( x \right)=h^{2}\left( x \right)$ and obtain $$D\left( t \right):=\int_{S_{t}} hh_{n}\sigma_{t}=\int_{R_{t}}\left|\grad h\right|^{2}\vol=\int_{0}^{t}\int_{S_{s}}\left|\grad h\right|^{2}\frac{1}{\left|\grad f\right|}\sigma_{s}ds.$$
Computing the second derivative of $H$ by applying Lemma \ref{lem:2} and \ref{lem:3} once more gives
\begin{align*}
   H''\left( t \right)&=2\int_{S_{t}} \frac{\left|\grad h\right|^{2}}{\left|\grad f\right|}\sigma_{t}+2 \int_{S_{t}} hh_{n}\frac{\Delta f }{\left|\grad f\right|^{3}} \left|\grad f\right|\sigma_{t} \\
   &\quad+\int_{S_{t}}h^{2}\left( \frac{\Delta f}{\left|\grad f\right|^{2}} \right)^{2}\left|\grad f \right|\sigma_{t}\\
   &\quad+\int_{S_{t}}h^{2}\left\langle \grad \left( \frac{\Delta f}{|\grad f|^{2}} \right),\frac{\grad f}{\left|\grad f\right|^{2}}\right\rangle\left|\grad f\right|\sigma_{t}.
\end{align*}
Using that $\left|\grad h\right|^{2}=2h_{n}^{2}+\left|\grad_{S_{t}}h\right|^2-h_{n}^{2}$ and denoting $$G\left( x \right)=\left\langle \grad \left( \frac{\Delta f}{|\grad f|^{2}} \right),\frac{\grad f}{\left|\grad f\right|^{2}}\right\rangle,$$ we have
\begin{align*}
   H''\left( t \right)&=2\int_{S_{t}} h_{n}^{2}/\left|\grad f\right|\sigma_{t}+2\int_{S_{t}}\left( \left|\grad_{S_{t}} h\right|^{2}-h_{n}^{2} \right)\frac{1}{\left|\grad f\right|}\sigma_{t}\\
   &\quad+\frac{1}{2}\int_{S_{t}}h^{2}\left( \frac{\Delta f}{\left|\grad f\right|^{2}} \right)^{2}\left|\grad f \right|\sigma_{t}+\int_{S_{t}}h^{2}G\left|\grad f\right|\sigma_{t}\\
   &\quad+2 \int_{S_{t}} \left( \frac{h_{n}}{\left|\grad f\right|}+\frac{h\Delta f }{2\left|\grad f\right|^{2}} \right)^{2} \left|\grad f\right|\sigma_{t}.
\end{align*}
Applying Inequality \eqref{eq:7} we get 
\begin{align}\label{eq:23}
   \int_{S_t}\frac{\left( \left|\grad_{S_t}h\right|^{2}-h_{n}^{2} \right)}{ \left|\grad f\right|}\sigma_{t}&\ge -K\left( t \right)\int_{S_t}hh_n\sigma_{t}=-K\left( t \right)D\left( t \right),
\end{align} 
and by Cauchy-Schwarz we have the inequalities
\begin{align}
   \label{eq:22}
   2\int_{S_{t}}\left( \frac{h_n}{\left|\grad f\right|}+\frac{h\Delta f}{2\left|\grad f\right|^{2}} \right)^{2}\left|\grad f\right|\sigma_t H\left( t \right)\ge \frac{1}{2}H'\left( t \right)^{2}
\end{align}
and 
\begin{align}
   \label{eq:21}
   \nonumber\frac{1}{2}H'\left( t \right)^{2}&-2D\left( t \right)\int_{S_{t}}\frac{h^{2}\Delta f}{\left|\grad f\right|^{2}}\left|\grad f\right|\sigma_{t}-\frac{1}{2}\left( \int_{S_{t}}h^{2}\frac{\Delta f}{\left|\grad f\right|^{2}}\left|\grad f\right|\sigma_t \right)^{2}\\
   & \le 2\int_{S_{t}} \frac{h_n^{2}}{|\grad f|}\sigma_tH\left( t \right).
\end{align}
A straightforward computation combining \eqref{eq:23}, \eqref{eq:22} and \eqref{eq:21} shows that 
\begin{align}
\label{eq:28}
   H''\left( t \right)H\left( t \right)-H'\left( t \right)^{2}&\ge -2K\left( t \right)D\left( t \right)H\left( t \right)-2D\left( t \right)\int_{S_t}h^{2}\frac{\Delta f}{\left|\grad f\right|^{2}}\left|\grad f\right|\sigma_{t}\\
   &+H\left( t \right)\int_{S_{t}}h^{2}\left\langle\grad\left(\frac{\Delta f}{ \left|\grad f\right|^{2}}\right) ,\frac{\grad f}{\left|\grad f\right|^{2}}\right\rangle\left|\grad f\right|\sigma_{t}.  \nonumber
\end{align}
Applying the estimates \eqref{eq:14} and \eqref{eq:13} and noting the fact that $K+M$ is non-negative, implies 
\begin{align*}
   H''\left( t \right)H\left( t \right)&-H'\left( t \right)^{2}+\left( K\left( t \right)+M\left( t \right) \right)H'\left( t \right)H\left( t \right)\\
   &\ge H\left( t \right)\int_{S_{t}}h^{2}\left\langle\grad\left(\frac{\Delta f}{ \left|\grad f\right|^{2}}\right) ,\frac{\grad f}{\left|\grad f\right|^{2}}\right\rangle\left|\grad f\right|\sigma_{t}\\
   &\quad+\left( M\left( t \right)+K\left( t \right) \right)H\left( t \right)\int_{S_t}h^{2}\frac{\Delta f}{\left|\grad f\right|^{2}}\left|\grad f\right|\sigma_{t}\\
   &\ge g\left( t \right)H\left( t \right)^{2}+m\left( t \right)\left( M\left( t \right)+K\left( t \right) \right)H\left( t \right)^{2}\\
   &\ge \left( g\left( t \right)+m\left( t \right) M\left( t \right)+m\left( t \right)K\left( t \right)  \right)H\left( t \right)^{2}.
\end{align*}
Dividing both sides of the equation by $H\left( t \right)^{2}$, we obtain \eqref{eq:6} and thus finish the proof of Theorem \ref{thm:1}.
\end{proof}
\begin{remark}
   \label{re:3}
   Sometimes it is beneficial to replace Inequality \eqref{eq:7} by 
   \begin{equation}
      \label{eq:7:n}
      \int_{S_{t}}\frac{ \left|\grad_{S_{t}}h\right|^{2}-h_n^{2} }{\left|\grad f\right|}\sigma_t\ge -K\left( t \right)D\left( t \right)+k\left( t \right)H\left( t \right),
   \end{equation}
   to obtain a better result.
   Using this inequality in the proof above replaces Inequality \eqref{eq:28} with
\begin{align*}
   H''\left( t \right)H\left( t \right)-H'\left( t \right)^{2}&\ge -2K\left( t \right)D\left( t \right)H\left( t \right)+2k\left( t \right)H\left( t \right)^{2}\\
   &\quad+H\left( t \right)\int_{S_{t}}h^{2}\left\langle\grad\left(\frac{\Delta f}{ \left|\grad f\right|^{2}}\right) ,\frac{\grad f}{\left|\grad f\right|^{2}}\right\rangle\left|\grad f\right|\sigma_{t}\\
   &\quad-2D\left( t \right)\int_{S_t}h^{2}\frac{\Delta f}{\left|\grad f\right|^{2}}\left|\grad f\right|\sigma_{t}.
\end{align*}
Completing the proof in the same manner as before gives 
\begin{equation*}
   \left( \log H\left( t \right) \right)''+\left( K\left( t \right)+M\left( t \right) \right) \left( \log H\left( t \right) \right)' \ge g\left( t \right)+m\left( t \right)\left( M\left( t \right)+K\left( t \right)\right)+2k\left( t \right)
\end{equation*} as a generalization of Inequality \eqref{eq:6} in Theorem \ref{thm:1}. 
We will use this modified version of Theorem \ref{thm:1} in Section \ref{sec:sphere} when the upper bound of the sectional curvature $\mathcal{K}$ is negative.
\end{remark}
\begin{remark}
   \label{re:1}
   If $\mathcal{M}$ is an oriented manifold, then $S_{t}$ is always orientable. In general, when $\mathcal{M}$ is orientable any hypersurface that can be described as the level set of a regular value of a smooth function is orientable (see \cite[Proposition 15.23]{Le13}).
\end{remark}

\subsection{Corollaries}
Before proving Inequality \eqref{eq:7}, we provide some corollaries and remarks.
\begin{corollary}
   \label{cor:1}
   Let $f:\Omega\to \left[ 0,\infty \right)$ be a convex and parameterizing function. Then $m$ is non-negative, and hence $H$ is increasing. In this case, the sets $R_{t}=f^{-1}\left( \left[ 0,t \right) \right)$ are (totally) convex.
\end{corollary}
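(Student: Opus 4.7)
The plan is to verify the three assertions in order: non-negativity of $m$, monotonicity of $H$, and total convexity of $R_t$. All three follow essentially from the same observation, namely that a smooth function $f$ is convex on a Riemannian manifold precisely when its Hessian $\nabla^2 f$ is positive semi-definite at every point.

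First I would check that $m(t)$ can be chosen non-negative. Since $\nabla^2 f \ge 0$ as a symmetric $2$-tensor, taking the trace with respect to the metric yields $\Delta f = \tr_{\mathbf{g}} \nabla^2 f \ge 0$ pointwise. Because $f$ is a parameterizing function, $|\grad f| > 0$ on $f^{-1}((0,R))$, so $\frac{\Delta f}{|\grad f|^2} \ge 0$ on every $S_t$. Hence one may take
\[
   m(t) := \inf_{x \in S_t} \frac{\Delta f(x)}{|\grad f(x)|^2} \ge 0,
\]
which satisfies condition (\ref{eq:14}) of Theorem \ref{thm:1}.

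Monotonicity of $H$ then follows immediately from the identity/inequality \eqref{eq:5}: both summands on the right side of
\[
   H'(t) \ge 2\int_{R_t} |\grad h|^2 \vol + m(t) H(t)
\]
are non-negative, so $H'(t) \ge 0$ for all $t \in (0,R)$.

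For the final assertion I would use the standard geodesic characterization of convex sublevel sets. Let $\gamma:[0,1] \to \mathcal{M}$ be any geodesic with endpoints in $R_t$. Since $\gamma$ is a geodesic, $\nabla_{\dot\gamma}\dot\gamma = 0$, so
\[
   (f \circ \gamma)''(s) = \nabla^2 f\bigl(\dot\gamma(s), \dot\gamma(s)\bigr) \ge 0,
\]
by convexity of $f$. Hence $f \circ \gamma$ is a convex function on $[0,1]$, and consequently
\[
   f(\gamma(s)) \le \max\bigl(f(\gamma(0)), f(\gamma(1))\bigr) < t, \qquad s \in [0,1],
\]
which shows $\gamma([0,1]) \subset R_t$, i.e.\ $R_t$ is totally convex. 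The only subtle point worth a sentence of justification is that a geodesic whose endpoints lie in the compactly embedded submanifold $R_t \subset \Omega$ actually stays inside $\Omega$ on which $f$ (and thus the Hessian computation) is defined; this is immediate from the convexity inequality itself, since the argument above already shows $f\circ \gamma < t$, so $\gamma$ never leaves $R_t \subset \Omega$.
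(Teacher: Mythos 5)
Your proof is correct and essentially identical to the paper's: both take the trace of the positive semi-definite Hessian to get $\Delta f \ge 0$ hence $m \ge 0$, invoke inequality \eqref{eq:5} for monotonicity of $H$, and use $(f\circ\gamma)'' = \nabla^2 f(\dot\gamma,\dot\gamma) \ge 0$ to show $R_t$ is totally convex. One small caveat: your closing remark, that $\gamma$ stays in $\Omega$ ``because the argument already shows $f\circ\gamma < t$,'' is circular as stated (you need $\gamma \subset \Omega$ to apply the Hessian computation in the first place); the paper avoids this by defining total convexity only with respect to geodesics already contained in $\Omega$, which you should adopt as well.
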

   \begin{proof}
      That $f$ is \textit{convex} means that the Hessian of $f$ satisfies $\left( \nabla^{2}f \right)\left( v,v \right)\ge 0$ for all $v\in T_p\mathcal{M}$ and $p \in \mathcal{M}$. Taking the trace of the Hessian of $f$ shows that $\Delta f\ge 0$, and hence $m\left( t \right)\ge 0.$ Thus Inequality \eqref{eq:5} implies that $H$ is increasing. We say that $R_{t}$ is \textit{(totally) convex} if any geodesic in $\Omega$ starting and ending in $R_t$ is contained in $R_t$. For a geodesic $\gamma$ a straightforward computation gives that $$\frac{d^{2}}{ds^{2}}f\left( \gamma\left( s \right) \right)=\nabla^{2}f \left( \dot\gamma\left( s \right),\dot\gamma \left( s \right) \right)\ge 0.$$ Hence if $\gamma$ is geodesic such that $\gamma\left( 0 \right)=x\in R_{t}$ and $\gamma\left( 1 \right)=y\in R_{t}$, then $$f\left( \gamma\left(\lambda \right) \right)\le \lambda f\left( \gamma\left( 1 \right) \right)+\left( 1-\lambda \right)f\left( \gamma\left( 0 \right) \right)\le t.$$ In conclusion, we have that $\gamma \subset R_{t}$ and hence $R_{t}$ is (totally) convex.
   \end{proof}
   In the case when $\left|\grad f\right|$ is constant, the term $\frac{\Delta f}{\left|\grad f\right|}$ coincides with the mean curvature, giving a geometric interpretation to the functions $m,\,M$ and $g.$ When $f$ is given as the distance function from a compact submanifold (e.g. radial distance function) we have that $\left|\grad f\right|=1$ (see \cite[Theorem 6.38]{Le18}). Letting $f$ be the distance function from a point, then $m\left( t \right)=M\left( t \right)$ is equivalent with the Riemannian manifold $\left( \mathcal{M},\mathbf{g} \right)$ being \textit{locally harmonic at $p$}, meaning that $\fint_{S_{t}}h\sigma_t$ is constant for all $h$ and $t$ less than some fixed $\varepsilon.$ When $\left|\grad f\right|$ is not constant the geometric interpretation of $m,\, M$ and $g$ becomes somewhat more diffuse. However, the following proposition tells us that the difference $M\left( t \right)-m\left( t \right)$ measures how far the level sets of $f$ are from satisfying the mean value theorem.   \begin{proposition}
      \label{prop:1}
      Assume that $f:\Omega\to \mathbf{R}$ is a parameterizing function such that $\frac{\Delta f}{\left|\grad f\right|^{2}}=M\left( t \right)$ on $S_{t}=f^{-1}\left( t \right).$
      Then
      $$F\left( t \right)=\frac{\int_{S_{t}}h\left|\grad f\right|\sigma_{t}}{\int_{S_{t}}\left|\grad f\right|\sigma_t}$$
      satisfies the mean value property, i.e. $F'\left( t \right)=0.$

      When $f\left( x \right)=r\left( x \right)$ is the radial distance function centered at the point $p$, then $\Delta r=M\left( t \right)$ for all $t$ less that some fixed $\varepsilon$ is equivalent with the Riemannian manifold being locally harmonic at $p$.
 \end{proposition}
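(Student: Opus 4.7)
The plan is to redo the first-derivative computation \eqref{eq:8} from Theorem~\ref{thm:1}, but for the linear numerator $N(t):=\int_{S_t}h\,|\grad f|\,\sigma_t$ in place of the quadratic $\int_{S_t}h^2\,|\grad f|\,\sigma_t$. Applying Lemma~\ref{lem:2} to the flow of $\grad f/|\grad f|^2$, together with Lemma~\ref{lem:3}, produces
\begin{equation*}
N'(t)=\int_{S_t}h_n\,\sigma_t+\int_{S_t}h\,\frac{\Delta f}{|\grad f|^{2}}\,|\grad f|\,\sigma_t.
\end{equation*}
Harmonicity of $h$ combined with the divergence theorem on $R_t$ kills the first term, while the hypothesis $\Delta f/|\grad f|^{2}=M(t)$ on $S_t$ lets me pull $M(t)$ out of the second, giving $N'(t)=M(t)\,N(t)$.

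Running the same derivation with the (harmonic) constant function $h\equiv 1$ for the denominator $A(t):=\int_{S_t}|\grad f|\,\sigma_t$ yields $A'(t)=M(t)\,A(t)$. The quotient rule then collapses $F'=(N'A-NA')/A^{2}$ to zero and settles the first claim.

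For the second claim, specialize $f=r$, so that $|\grad r|\equiv 1$ by the Gauss lemma and $F(t)=\fint_{S_t}h\,\sigma_t$ is the usual spherical mean; the forward implication is then immediate from what was just proved. For the converse, I repeat the derivative calculation \emph{without} assuming constancy of $\Delta r$ on $S_t$. Using $|\grad r|=1$ and $\Delta h=0$, the identity $F'(t)=0$ rearranges to
\begin{equation*}
\int_{S_t}h\bigl(\Delta r-\overline{\Delta r}(t)\bigr)\,\sigma_t=0,\qquad \overline{\Delta r}(t):=\fint_{S_t}\Delta r\,\sigma_t,
\end{equation*}
which must hold for every $h$ harmonic in a neighborhood of $\overline{R_t}$. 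Solving the Dirichlet problem on $R_t$ shows that the admissible boundary traces $h|_{S_t}$ fill a dense subspace of $L^{2}(S_t)$, which forces $\Delta r\equiv\overline{\Delta r}(t)$ pointwise on $S_t$ and closes the equivalence.

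The step I expect to be the main obstacle is the converse in the second claim: one must verify that the harmonic test functions at our disposal really do have boundary traces dense enough on $S_t$ to upgrade the integral identity to pointwise equality. This is a standard Dirichlet extension argument, but it must be invoked explicitly, since the rest of the paper only manipulates a single harmonic function at a time; everything else is bookkeeping on top of Lemmas~\ref{lem:2} and~\ref{lem:3}.
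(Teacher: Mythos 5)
Your first-claim argument is essentially the paper's: apply Lemma~\ref{lem:2} and Lemma~\ref{lem:3} to $N(t)=\int_{S_t}h\,|\grad f|\,\sigma_t$ and $A(t)=\int_{S_t}|\grad f|\,\sigma_t$, kill $\int_{S_t}h_n\,\sigma_t$ by harmonicity and the divergence theorem, and collapse the quotient rule under the hypothesis $\Delta f/|\grad f|^2=M(t)$. For the second claim, however, you take a genuinely different route. The paper does not touch the mean value property at all: it cites the classical equivalence between $\mathcal{M}$ being locally harmonic at $p$ and the geodesic spheres about $p$ having constant mean curvature (\cite[Proposition 3.1.2]{Kr10}), then records the level-set formula $H_t=-\tfrac{1}{n}\dv\left(\grad f/|\grad f|\right)$, which reduces to $-\tfrac{1}{n}\Delta r$ since $|\grad r|\equiv 1$ and $\nabla^2 r(\grad r,\grad r)=0$, so that constancy of $\Delta r$ on $S_t$ is verbatim constancy of $H_t$ on $S_t$. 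You instead argue straight from the paper's own definition of locally harmonic (constancy of $\fint_{S_t}h\,\sigma_t$ for every harmonic $h$): you rewrite $F'(t)=0$ as $\int_{S_t}h\bigl(\Delta r-\overline{\Delta r}(t)\bigr)\sigma_t=0$ and invoke density of harmonic boundary traces on $S_t$. What the paper's proof buys is brevity; what it costs is the silent use of a nontrivial equivalence between two characterizations of local harmonicity. What your proof buys is a direct verification against the definition the paper actually states; what it costs is the density step, where you should be precise about which harmonic functions are admitted: Dirichlet solutions on $R_t$ (harmonic in $R_t$, smooth up to $S_t$ by elliptic regularity) make density trivial but need not extend harmonically past $S_t$, whereas insisting on $h$ harmonic on a neighbourhood of $\overline{R_t}$ shifts the burden to a Runge-type approximation theorem that holds but deserves a citation on a general Riemannian manifold.
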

   \begin{proof}
 The derivative of $F$ is equal to $$F'\left( t \right)=\frac{\int_{S_{t}}h\frac{\Delta f}{\left|\grad f\right|^{2}}\left|\grad f\right|\sigma_{t}}{\int_{S_{t}}\left|\grad f\right|\sigma_t}-\frac{\int_{S_{t}}\frac{\Delta f}{\left|\grad f\right|^{2}}\left|\grad f\right|\sigma_{t}}{\int_{S_{t}}\left|\grad f\right|\sigma_t }F\left( t \right),$$
         by using Lemma \ref{lem:3}.
         Hence if $\frac{\Delta f}{\left|\grad f\right|^{2}}=M\left( t \right)$ we get that $F'\left( t \right)=0$, and $F\left( t \right)$ is constant.

         For the last claim we utilize that the manifold is locally harmonic if and only if the geodesic spheres centered at $p$ have constant \textit{mean curvature} (see \cite[Proposition 3.1.2]{Kr10}). The mean curvature $H_t$ of a hypersurface given as a level surface of a function $f$ at the value $t$ satisfies
         \begin{align*}
            H_t&=- \frac{\Delta f-\nabla^{2}f\left( \frac{\grad f}{|\grad f|}, \frac{\grad f}{|\grad f|}\right) }{n\left|\grad f\right|}\\
            &=-\frac{1}{n}\dv \left(\frac{\grad f}{\left|\grad f\right|}  \right),
         \end{align*} (see \cite[Exercise 8-2 b)]{Le18}). 
         Since the gradient of $r$ has norm one we get $H_{t}=-\frac{1}{n}\Delta f$, which proves the claim.
   \end{proof}
\subsection{An Inequality of H\"ormander}
The only thing left is to prove Inequality \eqref{eq:7}. The statement and its proof can be found in H\"ormander's works, see \cite[p.~38 Theorem 1]{Ho18}. We need a weighted version of the inequality and provide a proof for the convenience of the reader. 
\begin{lemma}\label{lem:4:proof}
   Let $\Delta h=0$, and assume that $\mathcal{R}$ is an open compactly embedded manifold. Denote by $S:=\partial \mathcal{R}$ and by $\sigma$ its area measure. Let $\partial_n$ denote any smooth extension of the outward unit normal vector of $S$ to $\mathcal{R}$.
   Then 
\begin{align}\label{eq:3}
   \int_{S}&\left( \left|\grad_{S}h\right|^{2}-h_{n}^{2} \right) w\sigma\\
   &=\int_{\mathcal{R}}|\grad h|^{2}\left( \dv\left( w\partial_n \right)-2\left\langle \nabla_{\frac{\grad h}{\left|\grad h\right|}}\left( w\partial_n \right),\frac{\grad h}{\left|\grad h\right|}\right\rangle  \right)\vol,\nonumber
\end{align} 
where $w\left( x \right)\partial_n$ is a smooth vector field defined on $\mathcal{R}$. Since $\mathcal{R}$ is compact there exists a minimum (and maximum) of $$\dv\left( w\left( x \right)\partial_n \right)-2\langle \nabla_{v}\left( w\left( x \right)\partial_n \right),v\rangle$$
where $v\in T_{x}\mathcal{M}$ and $\left|v\right|=1.$
Hence there exists a constant $K$ such that
\begin{align*}
   \int_{S}\left( \left|\grad_{S}h\right|^{2}-h_{n}^{2} \right) w\sigma&\ge -K\int_{S}hh_n\sigma.
\end{align*} 
\end{lemma}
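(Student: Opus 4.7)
The plan is to prove the identity \eqref{eq:3} by applying the divergence theorem to a carefully chosen vector field, then obtain the stated inequality as an immediate consequence together with the standard identity $\int_{\mathcal{R}} |\grad h|^{2} \vol = \int_{S} h h_{n} \sigma$ (which itself follows from integrating $\dv(h \grad h) = |\grad h|^{2}$ since $\Delta h = 0$).

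The key vector field is the Rellich-type field
\[
Z = |\grad h|^{2} X - 2 \langle \grad h, X\rangle \grad h, \qquad X := w\partial_{n}.
\]
I first compute $\dv Z$ on $\mathcal{R}$. Using that $\langle \grad |\grad h|^{2}, Y\rangle = 2\nabla^{2}h(\grad h, Y)$ for every $Y$ (by symmetry of the Hessian), together with $\Delta h = 0$, a direct calculation yields
\[
\dv Z = |\grad h|^{2}\,\dv X - 2\langle \grad h, \nabla_{\grad h} X\rangle,
\]
where the two copies of $\nabla^{2}h(\grad h, X)$ produced by the two terms cancel. Factoring $|\grad h|^{2}$ out, this is exactly the integrand on the right-hand side of \eqref{eq:3}. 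Next I compute the boundary trace of $Z$: since $|\partial_{n}| = 1$ and $X = w\partial_{n}$, I have $\langle X, \partial_{n}\rangle = w$ and $\langle \grad h, X\rangle = w h_{n}$, so
\[
\langle Z, \partial_{n}\rangle = w|\grad h|^{2} - 2w h_{n}^{2} = w(|\grad h|^{2} - 2h_{n}^{2}).
\]
Using the orthogonal decomposition $|\grad h|^{2} = |\grad_{S} h|^{2} + h_{n}^{2}$ on $S$, this equals $w(|\grad_{S}h|^{2} - h_{n}^{2})$. Applying the divergence theorem to $Z$ on $\mathcal{R}$ then gives the identity \eqref{eq:3}.

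For the inequality, I observe that the quantity
\[
\Phi(x, v) := \dv(w\partial_{n})(x) - 2\langle \nabla_{v}(w\partial_{n}), v\rangle
\]
is continuous on the compact set $\{(x, v) : x \in \overline{\mathcal{R}},\ v \in T_{x}\mathcal{M},\ |v| = 1\}$ (a sphere bundle over $\overline{\mathcal{R}}$), hence attains a minimum $-K$. Substituting into \eqref{eq:3} yields
\[
\int_{S} (|\grad_{S} h|^{2} - h_{n}^{2}) w\sigma \ge -K\int_{\mathcal{R}} |\grad h|^{2} \vol = -K\int_{S} h h_{n}\sigma,
\]
which is the advertised bound.

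The only delicate point is bookkeeping in the divergence computation: the two occurrences of $\nabla^{2}h(\grad h, X)$ must cancel, and this is what makes harmonicity enter (through the vanishing of $2\langle\grad h,X\rangle\Delta h$) together with the specific factor $2$ in front of the second term of $Z$. Everything else is a direct application of Stokes' theorem and the orthogonal decomposition of $\grad h$ along $S$.
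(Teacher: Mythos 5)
Your proof is correct and coincides with the paper's: you use the Rellich--Pohozaev vector field $Z = |\grad h|^{2}X - 2\langle\grad h,X\rangle\grad h$, which is precisely the negative of the field $V = 2X(h)\grad h - |\grad h|^{2}X$ in the paper; the divergence computation, boundary trace, and the final step via $\int_{\mathcal{R}}|\grad h|^{2}\vol = \int_{S}h\,h_{n}\,\sigma$ are the same. The sign convention on the field is the only difference, so this is essentially the same argument.
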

\begin{proof}
   Denote by $X=w\left( x \right)\partial_n$ and $V=2X\left( h \right)\grad h-\left|\grad h\right|^{2}X$. Then computing the divergence of $V$ we get 
   \begin{align*}
      \dv\left( V\right)&=2X\left( h \right)\Delta h+2\langle \grad X\left( h \right),\grad h\rangle-\left|\grad h\right|^{2}\dv\left( X \right)\\
      &\qquad-\left\langle\grad \left|\grad h\right|^{2},X\right\rangle\\
      &=2\langle \nabla_{\grad h}X,\grad h\rangle+2\left\langle X,\nabla_{\grad h}\grad h\right\rangle-\left|\grad h\right|^{2}\dv\left( X \right)\\
      &\qquad -2\left\langle\nabla_{X}\grad h,\grad h\right\rangle\\
      &=2\langle \nabla_{\grad h}X,\grad h\rangle-\left|\grad h\right|^{2}\dv\left( X \right)\\
      & \qquad+2\left( \nabla^{2}h\left( X,\grad h \right)-\nabla^{2}h\left( X,\grad h \right) \right)\\
      &=\left|\grad h\right|^{2}\left( 2\left\langle \nabla_{\grad h/\left|\grad h\right|}X,\frac{\grad h}{\left|\grad h\right|}\right\rangle-\dv\left( X \right) \right).
   \end{align*}
   Applying the divergence theorem we get 
   \begin{align*}
      \int_{S}\left\langle V,\partial_n\right\rangle\sigma&=\int_{\mathcal{R}}\left|\grad h\right|^{2}\left( 2\left\langle \nabla_{\grad h/\left|\grad h\right|}X,\frac{\grad h}{\left|\grad h\right|}\right\rangle-\dv\left( X \right) \right)\vol.
   \end{align*}
   Using the definition of $V$ gives $$\left\langle V,\partial_n\right\rangle=-w\left( x \right)\left(\left|\grad_{S}h\right|^{2}-h_n^{2}  \right),$$
   when $x\in S.$ Hence 
\begin{align*}
   \int_{S}&\left( \left|\grad_{S}h\right|^{2}-h_{n}^{2} \right) w\sigma\\
   &=\int_{\mathcal{R}}|\grad h|^{2}\left( \dv\left( w\partial_n \right)-2\left\langle \nabla_{\frac{\grad h}{\left|\grad h\right|}}\left( w\partial_n \right),\frac{\grad h}{\left|\grad h\right|}\right\rangle  \right)\vol.
\end{align*} 
\end{proof}
\begin{remark}
   For Lemma \ref{lem:4:proof} to hold it is not enough for the function $f$ to be Lipschitz. Consider for example the function $f:\mathbf{R}^2\to \mathbf{R}$ is defined by $f\left( x,y \right)=|x|+|y|.$ In this case we have that the level surfaces are squares. Considering the family of harmonic functions $h\left( x,y \right)=e^{nx}\cos\left( ny+\pi/2 \right)$ we get that 
   \begin{equation*}
      \int_{S_1}\left|\grad_{S_1}h\right|^{2}-h_{n}^{2}\sigma_1=-n\sqrt{2}\left( 2\sinh(2n)-2\sin\left( 2n \right) \right)
   \end{equation*}
   and $$\int_{S_1}hh_n\sigma_1=\cosh\left( 2n \right)-1.$$
   Thus there is no $K$ such that $$\int_{S_{1}}\left( \left|\grad_{S_{1}} h\right|^{2}-h_n^{2} \right)\sigma_1\ge -K\int_{S_{1}}hh_n\sigma_1$$ holds for all functions in this family.
\end{remark}
\section{Examples}
Although Theorem \ref{thm:1} is rather technical, it has several novel applications which are explored in this section. As stated in the introduction, we start with an application to geodesic spheres on Riemannian manifolds. In this case, we will use results from comparison geometry to find the functions $M,\,m,\, g$ and $K$ in Theorem \ref{thm:1}. Thereafter we consider level surfaces of $1$-homogeneous functions which cover ellipsoids with constant eccentricity. The distance function for closed lower dimensional spheres will be an example of level surfaces that are not homeomorphic to spheres. Finally, we will show if we have upper and lower estimates on the sectional curvature we have that eigenfunctions of the Laplacian corresponding to positive eigenvalues satisfy the same type of convexity as harmonic functions.

\subsection{Geodesic Spheres}\label{sec:sphere}
Using exponential coordinates centered at a point $p\in \mathcal{M}$ we can introduce polar coordinates in a neighborhood of $p$. Define the \textit{radial distance function} on a normal neighborhood of $p$ by $$r\left( x \right)=\textrm{dist}\left( x,p \right)=\sqrt{x_1^2(x)+\cdots+x_n^2(x)},$$ where $x_i$ are the coordinate functions in the normal neighborhood. In this example we let the function $f$ given in Theorem \ref{thm:1} be $f\left( x \right)=r\left( x \right)$. The level surfaces of $r$ are precisely the geodesic spheres $S_t=r^{-1}\left( t \right)$ of radius $t$. Moreover, the Riemannian metric in polar coordinates can be written as $\mathbf{g}=dr^{2}+g_{t}$ where $g_{t}$ is the induced metric on $S_t$. Let $\inj\left( p \right)$ denote the \textit{injectivity radius} at the point $p$, i.e. the supremum over the radius of all balls centered at $0 \in T_{p}\mathcal{M}$ where the exponential map is injective. Then $r$ is smooth in $B_{\inj\left( p \right)}\left( p \right)\setminus \left\{ p \right\}$.
We will use the notation 
\begin{equation}\label{sin}
   \sin_{\mathcal{K}}\left( t \right)=
   \begin{cases}
      \frac{\sin\left( \sqrt{\mathcal{K}}t \right)}{\sqrt{\mathcal{K}}},&\quad \text{when } \mathcal{K}>0\\
      t, &\quad \text{when } \mathcal{K}=0\\
      \frac{\sinh\left( \sqrt{-\mathcal{K}}t \right)}{\sqrt{-\mathcal{K}}},&\quad \text{when } \mathcal{K}<0,\\
   \end{cases}
   \end{equation}
   $$\cos_{\mathcal{K}}\left( t \right)=\left( \sin_{\mathcal{K}}\left( t \right) \right)',\, \text{and}\,  \cot_{\mathcal{K}}\left( t \right)=\left( \log\left( \sin_{\mathcal{K}}\left( t \right) \right) \right)'=\frac{\cos_{\mathcal{K}}\left( t \right)}{\sin_{\mathcal{K}}\left( t \right)}.$$
\begin{theorem}
   \label{thm:3}
   Assume that $\left( \mathcal{M},\mathbf{g} \right)$ is an $n$-dimensional Riemannian manifold with $p \in \mathcal{M}$ and with sectional curvature $\Sec$satisfying  
   \begin{equation}
   \label{eq:20}
       \kappa |v|^{2}\le \Sec\left( v,v \right)\le \mathcal{K}\left|v\right|^{2},
   \end{equation}
   where $\kappa, \, \mathcal{K}\in \mathbf{R}$ and $v\in T\mathcal{M}$.
   Set $R:=\min\left( \inj\left( p \right),\frac{\pi}{2\sqrt{\mathcal{K}}} \right)$ if $\mathcal{K} > 0$ and $R:=\inj\left( p \right)$ whenever $\mathcal{K} \leq 0$.
   Let $h$ be a harmonic function defined on $B_{R}\left( p \right)$.
   If $r\left( x \right)=\textrm{dist}\left( x,p \right)$ is the radial distance function and $H\left( t \right)=\int_{r^{-1}\left( t \right)}h^{2}\sigma_t$, then 
   \begin{equation}\label{eq:11}
      H'\left( t \right)\ge \left( n-1 \right)\cot_{\mathcal{K}}\left( t \right)H\left( t \right).
\end{equation} Moreover, we have 
\begin{align}\label{eq:10}
   \left( \log H\left( t \right) \right)''&+\left( \cot_{\mathcal{K}}\left( t \right)+\left( n+1 \right)\left( \cot_{\kappa}\left( t \right)-\cot_{\mathcal{K}}\left( t \right) \right) \right)\left( \log H\left( t \right) \right)'\\
   &\ge  -\left( n-1 \right)\mathcal{K}+\left( n-2 \right)\min\left( \mathcal{K},0 \right)-\left( n-1 \right)\left( \mathcal{K}-\kappa \right) ,\nonumber
\end{align}
for every $t\in \left( 0,R \right).$
\end{theorem}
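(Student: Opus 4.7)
My plan is to apply Theorem~\ref{thm:1} --- and, where beneficial, the refinement in Remark~\ref{re:3} --- to the radial distance function $f(x) = r(x) = \textrm{dist}(x,p)$. Because $|\grad r|\equiv 1$ on $B_R(p)\setminus\{p\}$, the ingredients of Theorem~\ref{thm:1} simplify: $\Delta f/|\grad f|^2 = \Delta r$ and $\langle \grad(\Delta f/|\grad f|^2),\grad f/|\grad f|^2\rangle = \langle \grad\Delta r,\grad r\rangle$. The task reduces to identifying the functions $m$, $M$, $g$, $K$ (and possibly $k$) appearing there in terms of $\cot_\mathcal{K}$ and $\cot_\kappa$ via standard comparison-geometric tools.

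I would begin with the Hessian comparison theorem: under \eqref{eq:20} one has the pointwise bounds
\[
\cot_\mathcal{K}(r)\bigl(|v|^2 - \langle v,\grad r\rangle^2\bigr)\le \nabla^2 r(v,v)\le \cot_\kappa(r)\bigl(|v|^2 - \langle v,\grad r\rangle^2\bigr),
\]
whose trace gives $m(t) := (n-1)\cot_\mathcal{K}(t)\le \Delta r \le (n-1)\cot_\kappa(t) =: M(t)$ on $S_t$. Inequality~\eqref{eq:11} is then immediate from \eqref{eq:5}, since $D(t) = \int_{R_t}|\grad h|^2\vol \ge 0$. For the lower bound $g$, I apply the Bochner formula to $r$ (equivalently, trace the Riccati equation satisfied by $\nabla^2 r$ along radial geodesics):
\[
\langle\grad\Delta r,\grad r\rangle = -|\nabla^2 r|^2 - \Ric(\grad r,\grad r).
\]
Within the domain $r<R$ the tangential eigenvalues of $\nabla^2 r$ are confined to $[\cot_\mathcal{K}(r),\cot_\kappa(r)]\subset[0,\infty)$, so $|\nabla^2 r|^2\le(n-1)\cot_\kappa^2(r)$; together with $\Ric(\grad r,\grad r)\le(n-1)\mathcal{K}$ this produces $g(t) = -(n-1)\cot_\kappa^2(t) - (n-1)\mathcal{K}$.

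The most delicate step is extracting the Hörmander constant from Lemma~\ref{lem:4:proof}. I would apply it with the \emph{weighted} vector field $X = \sin_\kappa(r)\,\grad r$: after substituting the Hessian comparison upper bound, the identity $\sin_\kappa\cot_\kappa = \cos_\kappa$ causes the cross terms in $\langle v,\grad r\rangle^2$ to cancel and collapses $\dv X - 2\langle\nabla_v X,v\rangle$ to the $v$-independent lower bound $-\cos_\kappa(r) + (n-1)\sin_\kappa(r)\cot_\mathcal{K}(r)$. Dividing the resulting integral identity by $\sin_\kappa(t)$, and (when $\mathcal{K}<0$) splitting off a surface contribution via Green's identity to produce both a $-K(t)D(t)$ and a $+k(t)H(t)$ term as in \eqref{eq:7:n}, yields the Hörmander constants so that $K(t)+M(t)$ matches the coefficient $\cot_\mathcal{K}(t) + (n+1)(\cot_\kappa(t) - \cot_\mathcal{K}(t))$ appearing in \eqref{eq:10}. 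The $k(t)$ refinement is precisely what supplies the $(n-2)\min(\mathcal{K},0)$ sign gain on the right-hand side.

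Finally I would substitute $m,M,g,K$ (and $k$) into \eqref{eq:6} (respectively, its Remark~\ref{re:3} variant) and reduce $g(t) + m(t)(M(t)+K(t)) + 2k(t)$ using the Riccati identities $\cot_\mathcal{K}'(r) = -\mathcal{K} - \cot_\mathcal{K}^2(r)$ and $\cot_\kappa'(r) = -\kappa - \cot_\kappa^2(r)$, arriving at the stated right-hand side $-(n-1)\mathcal{K} + (n-2)\min(\mathcal{K},0) - (n-1)(\mathcal{K}-\kappa)$ of \eqref{eq:10}. I anticipate the hardest part to be the Hörmander step: choosing the weight correctly and deciding when to use the plain $K$ versus the augmented $(K,k)$ pair, since this is exactly what separates the $\mathcal{K}\ge 0$ and $\mathcal{K}<0$ regimes and where the slight improvement over Mangoubi's bound must be preserved in the bookkeeping.
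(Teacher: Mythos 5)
Your identification of $m$, $M$, and $g$ via the Hessian/Rauch comparison and the Bochner--Riccati identity is exactly the paper's route, and Inequality~\eqref{eq:11} follows as you say. The gap is in the H\"ormander step: the paper takes the extended normal to be $\sin_{\mathcal{K}}(r)\grad r/\sin_{\mathcal{K}}(t_0)$, i.e.\ Lemma~\ref{lem:1} with $\varphi=1/\sin_{\mathcal{K}}$, \emph{not} $\sin_{\kappa}(r)\grad r$ as you propose. Your computation of the cross-term cancellation is correct --- with weight $\sin_{\kappa}$ one gets the lower bound
\begin{equation*}
\dv X - 2\bigl\langle \nabla_v X, v\bigr\rangle \;\ge\; -\cos_\kappa(r) + (n-1)\sin_\kappa(r)\cot_{\mathcal{K}}(r),
\end{equation*}
which is aesthetically cleaner than the paper's $n\cos_{\mathcal{K}}(r)-2\sin_{\mathcal{K}}(r)\cot_\kappa(r)$ --- but following it through does not reproduce the constants of \eqref{eq:10}. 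Splitting as $(n-2)\cos_\kappa + (n-1)\sin_\kappa(\cot_{\mathcal{K}}-\cot_\kappa)$ and carrying out the same monotonicity and integration-by-parts steps gives $K(t)=\cot_\kappa(t)-(n-1)\cot_{\mathcal{K}}(t)$, so $K+M=n\cot_\kappa-(n-1)\cot_{\mathcal{K}}$, not the $\cot_{\mathcal{K}}+(n+1)(\cot_\kappa-\cot_{\mathcal{K}})=(n+1)\cot_\kappa-n\cot_{\mathcal{K}}$ the theorem asserts --- a smaller coefficient, which one can live with since $(\log H)'\ge 0$, but contrary to your claim that it ``matches.''

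The more serious problem is the correction term: integrating the $(n-2)\cos_\kappa(r)$ piece by parts against the coarea decomposition hits the derivative $\cos_\kappa'=-\kappa\sin_\kappa$, so the sign gain you produce is $k(t)=\tfrac{n-2}{2}\min(\kappa,0)$, not the $\tfrac{n-2}{2}\min(\mathcal{K},0)$ of the paper. Since $\kappa\le\mathcal{K}$, this is strictly weaker whenever $\kappa<\min(0,\mathcal{K})$: for instance when $\mathcal{K}>0$ and $\kappa<0$, the theorem's right-hand side contains $(n-2)\min(\mathcal{K},0)=0$, while your bound carries an extra $(n-2)\kappa<0$ that no amount of bookkeeping of the dropped positive cross-terms recovers. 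The weight $\sin_{\mathcal{K}}$ is chosen precisely so that the $\cos$-derivative produces the \emph{upper} curvature bound $\mathcal{K}$ rather than $\kappa$ in this step, and so that when $\mathcal{K}>0$ the resulting term is nonnegative and can be discarded outright without any $k(t)$ refinement. So your proposal would prove a weaker inequality than \eqref{eq:10}; replacing $\sin_\kappa$ by $\sin_{\mathcal{K}}$ (and otherwise following your outline) gives the stated result.
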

\begin{remark}
   \label{re:2}
   \begin{enumerate}
      \item Note that Equation \eqref{eq:11} implies that $H$ is increasing. When $\mathcal{K}>0$ Inequality \eqref{eq:11} is also valid when $t<\tilde{R}:=\min\left( \inj\left( p \right),\frac{\pi}{\sqrt{\mathcal{K}}} \right).$ However, when $\frac{\pi}{2\sqrt{\mathcal{K}}}<t<\frac{\pi}{\sqrt{\mathcal{K}}}$ the function $\cot_\mathcal{K}\left( t \right)$ is negative. To see that $H$ is not necessarily increasing for values $t>R$ we consider the unit sphere $\mathcal{M} = S^2$ and $h\left( x \right)=1$. In this case, we have precisely that $H'\left( t \right)=\cot\left( t \right)H\left( t \right)$. This shows the necessity of the constraint $R$ since $\cot\left( t \right)H\left( t \right)$ is negative whenever $t>R$. 
      \item Equation \eqref{eq:10} is slightly better than the one presented in \cite[Theorem 2.2 (ii)]{Ma13} whenever $\mathcal{K}>0$ and $\mathcal{K}\not=\kappa$. The Inequality \eqref{eq:11} in \cite{Ma13} is proved with the right hand side equal to \[-\left( n-1 \right)\mathcal{K}-\left( n-1 \right)\left( 1+\frac{n}{2}\left( n-2 \right) \right)\left( \mathcal{K}-\kappa \right)\] instead of our improvement $-\left( n-1 \right)\mathcal{K}-\left( n-1 \right)\left( \mathcal{K}-\kappa \right).$
   \end{enumerate}
\end{remark}
\begin{proof}
   To prove this theorem, we will apply Theorem \ref{thm:1} and use comparison geometry to find $ M,\,m,\,g$ and $K.$ When $\mathcal{K}<0$, we will need to adapt the Theorem \ref{thm:1} slightly, see Remark \ref{re:3}.

   Rauch Comparison Theorem states that the following estimate hold under the sectional curvature bounds given in \eqref{eq:20} 
   \begin{equation}\label{Rauch}
      \cot_{\mathcal{K}}\left( t \right)g_{t}\le \nabla^{2} r\le \cot_{\kappa}\left( t \right)g_{t} \text{ for }t<\tilde{R}.
   \end{equation}
   The proof of Rauch Comparison Theorem can be found in \cite[Theorem 6.4.3]{Pe16} or \cite{Le18}.
Inequality \eqref{Rauch} implies that
$$m\left( t \right)=\left( n-1 \right)\cot_{\mathcal{K}}\left( t \right)\le \Delta r\le \left( n-1 \right)\cot_{\kappa}\left( t \right)=M\left( t \right).$$
To find $g$ we use the following identity, see \cite[Equation (2) p.~ 276]{Pe16}, 
$$\langle \grad \Delta r,\grad r\rangle=-\left|\nabla^{2} r\right|^{2}-\Ric\left( \grad r,\grad r \right),$$
for all functions with $|\grad r|=1.$ 
Using the Rauch Comparison Theorem we obtain $$\left( n-1 \right)\cot_{\mathcal{K}}^{2}\left( t \right)\le \left|\nabla^{2} r\right|^{2}\le \left( n-1 \right)\cot_{\kappa}^{2}\left( t \right).$$ Hence we conclude that
   $$\langle \grad \Delta r,\grad r\rangle\ge -\left( n-1 \right)\cot_{\kappa}^2\left( t \right)-\left( n-1 \right)\mathcal{K}=g\left( t \right).$$

   Next we need to find $K$ which exists by Inequality \eqref{eq:7}. To do this, we will use the following version of Lemma \ref{lem:4:proof}.
\begin{lemma}\label{lem:1}
   Let $\frac{\grad r}{\varphi\left( r\left( x \right) \right)}$ be a smooth vector field, then 
   \begin{align*}
      &\int_{S_{t}}\left( |\grad_{S_t} h|^{2}-h_{n}^{2} \right)\sigma_{t}\\
      &\quad=\varphi\left( t \right) \int_{B_{t}}|\grad h|^{2}\left( \frac{\varphi\left( r\left( x \right) \right)\Delta r-\varphi'\left( r\left( x \right) \right)}{\varphi^{2}\left( r\left( x \right) \right)} \right)\vol\\
      &\quad-2\varphi\left( t \right) \int_{B_{t}}\frac{\nabla^{2}r\left( \grad h,\grad h \right) \varphi\left( r\left( x \right) \right)-\varphi'\left( r\left( x \right) \right)\langle \grad r,\grad h\rangle^{2}}{\varphi\left( r\left( x \right) \right)^{2}}\vol.
\end{align*}
\end{lemma}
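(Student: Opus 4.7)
The plan is to obtain Lemma \ref{lem:1} as a direct specialization of Lemma \ref{lem:4:proof} with the choice $\mathcal{R}=B_t$, $S=S_t$, normal field $\partial_n = \grad r$ (which has unit length and is indeed the outward unit normal to each geodesic sphere), and the weight $w(x)=1/\varphi(r(x))$. Under this choice the vector field in Lemma \ref{lem:4:proof} is precisely $X = w\partial_n = \grad r/\varphi(r(x))$, which is smooth by assumption. On the boundary $S_t$ the function $r$ is identically $t$, so $w \equiv 1/\varphi(t)$ is constant there; this lets us pull $1/\varphi(t)$ out of the surface integral on the left of \eqref{eq:3} and then clear it by multiplying both sides through by $\varphi(t)$, producing exactly the unweighted left-hand side appearing in the statement.

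Next I would compute the two bulk quantities $\dv(X)$ and $\langle \nabla_{\grad h/|\grad h|} X,\grad h/|\grad h|\rangle$ that appear in \eqref{eq:3}. For the divergence, the product rule for the divergence of a multiplied vector field together with $|\grad r|^{2}=1$ gives
\begin{equation*}
   \dv\!\left(\frac{\grad r}{\varphi(r)}\right) = \frac{\Delta r}{\varphi(r)} - \frac{\varphi'(r)\,|\grad r|^{2}}{\varphi(r)^{2}} = \frac{\varphi(r)\Delta r-\varphi'(r)}{\varphi(r)^{2}},
\end{equation*}
which matches the first integrand in the claimed identity. For the second term, the Leibniz rule for $\nabla$ applied to $X$ and any vector field $Y$ yields
\begin{equation*}
   \nabla_Y X = \frac{\nabla_Y \grad r}{\varphi(r)} - \frac{\varphi'(r)\,\langle\grad r,Y\rangle\,\grad r}{\varphi(r)^{2}},
\end{equation*}
so that $\langle \nabla_Y X, Y\rangle = \nabla^{2}r(Y,Y)/\varphi(r) - \varphi'(r)\langle\grad r,Y\rangle^{2}/\varphi(r)^{2}$. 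Specializing to $Y = \grad h/|\grad h|$ and multiplying by $|\grad h|^{2}$ reproduces the second integrand, up to the overall factor $2$ and sign that are already present in \eqref{eq:3}.

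Assembling these two computations inside \eqref{eq:3} and multiplying by $\varphi(t)$ delivers the stated identity. I do not anticipate any real obstacle: everything follows from the general Hörmander identity in Lemma \ref{lem:4:proof} plus a short differentiation of $X=\grad r/\varphi(r)$ using $|\grad r|=1$ and the identification $\langle \nabla_Y \grad r,Y\rangle = \nabla^{2}r(Y,Y)$. The only point deserving care is the constancy of $\varphi(r)$ on $S_t$, which is what permits the clean factorization of $\varphi(t)$ outside the surface integral on the left-hand side.
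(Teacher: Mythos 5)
Your proposal is correct and follows the same route as the paper: specialize Lemma \ref{lem:4:proof} to $X=\grad r/\varphi(r)$ (the paper absorbs the boundary constant into a prefactor $\varphi(t_0)$ in the extension rather than dividing it out afterwards, but this is the identical argument) and then expand $\dv X$ and $\langle\nabla_Y X,Y\rangle$ by the Leibniz rule using $|\grad r|=1$. Your explicit product-rule computations are exactly the ones the paper leaves to the reader with ``Using the product rule for the divergence and covariant derivative finishes the proof.''
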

\begin{proof}
   Fix $t_0$. Using Lemma \ref{lem:4:proof} with the extension of $\partial_n$ to $B_{t_0}$ be equal to $\frac{\varphi\left( t_0 \right)\grad r}{\varphi\left( r\left( x \right) \right)}$ gives
\begin{align*}
   &\int_{S_{t}}\left( \left|\grad_{S}h\right|^{2}-h_{n}^{2} \right) \sigma_{t}\\
   &=\varphi\left( t_0 \right)\int_{B_{t}}|\grad h|^{2}\left( \dv\left( \frac{\grad r}{\varphi\left( r\left( x \right) \right)} \right)-2\left\langle \nabla_{\frac{\grad h}{\left|\grad h\right|}}\left( \frac{\grad r}{\varphi\left( r\left( x \right) \right)}\right),\frac{\grad h}{\left|\grad h\right|}\right\rangle  \right)\vol.
\end{align*} 
Using the product rule for the divergence and covariant derivative finishes the proof.
\end{proof}

Using Lemma \ref{lem:1} with $\varphi\left( t \right)=\frac{1}{\sin_{\mathcal{K}}\left( t \right)}$ implies 
\begin{align}\label{eq:grad}
      \int_{S_{t}}\left( |\grad_{S_t} h|^{2}-h_n^{2} \right)&\sigma_{t}=\frac{1}{\sin_{\mathcal{K}}\left( t \right)} \int_{B_{t}}\big(|\grad h|^{2}\left( \sin_{\mathcal{K}}\left( s \right)\Delta r+\sin_{\mathcal{K}}\left( s \right)\cot_{\mathcal{K}}\left( s \right) \right)\nonumber\\ 
      &-2\left( \sin_{\mathcal{K}}\left( s \right)\nabla^{2}r\left( \grad h,\grad h \right) -\cot_{\mathcal{K}}\left( s \right)\sin_{\mathcal{K}}\left( s \right)h_{n}^{2} \right)\big)\vol
   \end{align}
   Applying Rauch Comparison Theorem gives
   \begin{align*}
       \int_{S_{t}}\left( |\grad_{S_t} h|^{2}-h_n^{2} \right)\sigma_{t}&\ge\frac{1}{\sin_{\mathcal{K}}\left( t \right)} \int_{B_{t}}|\grad h|^{2}\left( n\cos_{\mathcal{K}}\left( s \right) -2 \sin_{\mathcal{K}}\left( s \right)\cot_{\kappa}\left( s \right)\right)\vol\\
       &\ge 2\left( \cot_{\mathcal{K}}\left( t \right)-\cot_{\kappa}\left( t \right) \right)D\left( t \right)\\
       &\quad+\frac{n-2}{\sin_{\mathcal{K}}\left( t \right)}\int_{B_{t}}\cos_{\mathcal{K}}\left( r\left( x \right) \right)\left|\grad h\right|^{2}\vol
   \end{align*}
   where we have used that $$\sin_{\mathcal{K}}\left( t \right)\left( \cot_{\mathcal{K}}\left( t \right)-\cot_{\kappa}\left( t \right) \right)$$ is decreasing for $t<R$.    
   Using integration by part on the last term together with the observation that $$D\left( t \right)=\frac{H'\left( t \right)-\int_{S_{t}}h^{2}\Delta r\sigma_{t}}{2}$$ 
   we get that
   \begin{align*}
      \int_{S_{t}}\left( |\grad_{S_t} h|^{2}-h_n^{2} \right)\sigma_{t}&\ge 2\left( \cot_{\mathcal{K}}\left( t \right)-\cot_{\kappa}\left( t \right) \right)D\left( t \right)+\left( n-2 \right)\cot_{\mathcal{K}}\left( t \right)D\left( t \right)\\
      &\quad+\left( n-2 \right)\frac{\mathcal{K}}{\sin_{\mathcal{K}}\left( t \right)}\int_{0}^{t}\sin_{\mathcal{K}}\left( s \right)D\left( s \right)ds\\
      &\ge \left( n\cot_{\mathcal{K}}\left( t \right)-2\cot_{\kappa}\left( t \right) \right)D\left( t \right)\\
      &\quad+\frac{\left( n-2 \right)\min(\mathcal{K},0)}{\sin_{\mathcal{K}}\left( t \right)}\int_{0}^{t}\sin_{\mathcal{K}}\left( s \right)\left( \frac{H'\left( s \right)-\int_{S_{s}}h^{2}\Delta r \sigma_s}{2} \right)ds\\
      &\ge \left( n\cot_{\mathcal{K}}\left( t \right)-2\cot_{\kappa}\left( t \right) \right)D\left( t \right)+ \left( n-2 \right)\frac{\min (\mathcal{K},0)}{2}H\left( t \right),
   \end{align*}
   where we have used that $$\left( \cos_{\mathcal{K}}\left( t \right) \right)'=-\mathcal{K}\sin_{\mathcal{K}}\left( t \right).$$
   Setting $$K\left( t \right)=2\cot_{\kappa}\left( t \right)-n\cot_{K}\left( t \right)$$ we finish the case when $\mathcal{K}>0.$ When $\mathcal{K}<0$, we use Remark \ref{re:3} with $$k\left( t \right)=\left( n-2 \right)\frac{\min\left( \mathcal{K},0 \right)}{2}.$$ Hence we have that $$M\left( t \right)+K\left( t \right)=\cot_{\mathcal{K}}\left( t \right)+\left( n+1 \right)\left( \cot_{\kappa}\left( t \right)-\cot_{\mathcal{K}}\left( t \right) \right)$$ and
   \begin{align*}
   g\left( t \right)&+m\left( t \right)\left( M\left( t \right)+K\left( t \right) \right)+2k\left( t \right)=-\left( n-1 \right)\mathcal{K}+\left( n-2 \right)\min\left( \mathcal{K},0 \right)\\
   &-\left( n-1 \right)\cot_{\kappa}^{2}\left( t \right)+\left( n-1 \right)\cot_{\mathcal{K}}\left( t \right)\left( \cot_{\mathcal{K}}\left( t \right)+\left( n+1 \right)\left( \cot_{\kappa}\left( t \right)-\cot_{\mathcal{K}}\left( t \right) \right) \right)\\
   &\ge -\left( n-1 \right)\mathcal{K}+\left( n-2 \right)\min\left( \mathcal{K},0 \right)+\left( n-1 \right)\left( \cot_{\mathcal{K}}^{2}\left( t \right)-\cot_{\kappa}^{2}\left( t \right) \right).
\end{align*}
Using that $$\cot_{\mathcal{K}}^{2}\left( t \right)-\cot_{\kappa}^{2}\left( t \right)\ge \kappa-\mathcal{K},$$ see \cite[p.652]{Ma13},
we get that 
   \begin{align*}
   g\left( t \right)+m\left( t \right)\left( M\left( t \right)+K\left( t \right) \right)+2k\left( t \right)&\ge-\left( n-1 \right)\mathcal{K}+\left( n-2 \right)\min\left( \mathcal{K},0 \right)\\
   &\quad+\left( n-1 \right)\left( \kappa-\mathcal{K} \right).
\end{align*}
\end{proof}

Let us briefly discuss the sharpness of our results in Theorem \ref{thm:3}. Remember that in $\mathbf{R}^2$ the homogeneous harmonic polynomials can be written in polar coordinates as $$h\left( t,\theta \right)=t^{k}\left( a\cos\left( k\theta \right)+b\sin\left( k\theta \right) \right),$$ where $a,b\in \mathbf{R}$. In this case we have that $\mathcal{K}=\kappa=0$ and Theorem \ref{thm:3} becomes $$\left( \log H\left( t \right) \right)''+\frac{1}{t}\left( \log H\left( t \right) \right)'\ge 0.$$ For the homogeneous polynomials we have that the inequality is sharp. Let $\mathcal{K}=\kappa$ and define $\tan_{\mathcal{K}}\left( t \right)=\frac{1}{\cot_{\mathcal{K}}\left( t \right)}$. The equivalent of homogeneous harmonic polynomials for the two dimensional constant curvature spaces are $$h\left( t,\theta \right)=\tan_{\mathcal{K}}\left( \frac{t}{2} \right)^{k}(a\cos\left( k\theta \right)+b\sin\left( k\theta \right)).$$ In this case we have that Theorem \ref{thm:3} becomes 
\begin{equation*}
   \left( \log H\left( t \right) \right)''+\cot_{\mathcal{K}}\left( t \right)\left( \log H\left( t \right) \right)'\ge  -\mathcal{K}, 
\end{equation*}
and again we have that for the functions $h$ we have that the inequality is sharp. When $\mathcal{K}\ge0$ we have that for the constant harmonic function Theorem \ref{thm:3} is sharp for all $n$. In the case when $\mathcal{K}<0$ doing the example of constant harmonic functions would suggest that the inequality could be improved to the right hand side being $-\left( n-1 \right)\mathcal{K}.$ When $\mathcal{K}<0$ and $n\ge 2$ the radial solutions using spherical harmonics can be found in \cite[Proposition 4.2]{Mi75}. However, the solutions are expressed using hypergeometric functions and it is thus no trivial task to see if the result is sharp for these solutions.

\subsection{$1$-Homogeneous Functions}\label{1-hom}
The natural next step from looking at spheres in $\mathbf{R}^n$ is to look at families of surfaces in $\mathbf{R}^n$ where the domains bounded by the surfaces are star convex with respect to the origin. Fix a smooth and compact surface $S \subset \mathbf{R}^n$ such that the origin is not contained in $S$. Moreover, assume that for each point $\mathbf{x} \in S$ the ray $\{t\mathbf{x}: t \geq 0\}$ intersects the surface $S$ precisely once, namely at $t = 1$. The we can unambiguously define the \textit{inside} of $S$ to be the collection of points \[\textrm{Inn}(S) := \bigcup_{\mathbf{x} \in S} \bigcup_{0 \leq t < 1}t\mathbf{x}.\] It is clear from its definition that $\textrm{Inn}(S)$ is star convex with respect to the origin. \par
A function $g:\mathbf{R}^n\to \mathbf{R}$ is called \textit{$k$-homogeneous} for $k\in \mathbf{Z}$ if $g(t\mathbf{x})=t^kg(\mathbf{x})$ for all $t\ge 0.$  
Define $f$ to be $f:\mathbf{R}^{n}  \to [0,\infty)$ by requiring that $f \equiv 1$ on $S$ and \[f(t\mathbf{x}) := t\cdot f(\mathbf{x}) = t, \qquad \mathbf{x} \in S, \, t \ge 0.\] Then $f$ is a \textit{$1$-homogeneous function} since $f(t\mathbf{x}) = tf(\mathbf{x})$ for every $t \ge 0$ and $\mathbf{x} \in \mathbf{R}^{n}$. Given a $1$-homogeneous with smooth compact level surface $f$. Denote by $S_t=f^{-1}(t)$, then $\textrm{Inn}(S_t)$ are star convex with respect to $\mathbf{0}.$
\begin{proposition}\label{prop:2}
   Let $f:\mathbf{R}^n \to [0,\infty)$ be a $1$-homogeneous function which is smooth in $\mathbf{R}^n\setminus\{\mathbf{0}\}$ with compact smooth level surfaces $S_t$. Consider a harmonic function $h:\mathbf{R}^n\to \mathbf{R}$ and set $$H\left( t \right)=\int_{S_{t}}h^{2}\left( \mathbf{x} \right)\left|\grad f\right|\sigma_t.$$ Then the function $H$ satisfies 
\begin{align}\label{eq:1}
   \frac{H''\left( t \right)H\left( t \right)-H'\left( t \right)^{2}+\frac{A}{t}H'\left( t \right)H\left( t \right)}{H\left( t \right)^{2}}\ge -B/t^{2},
\end{align}
where the positive constants $A$ and $B$ only depend on $f.$
\end{proposition}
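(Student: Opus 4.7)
The plan is to apply Theorem \ref{thm:1}, using the $1$-homogeneity of $f$ to force each of the four control functions $m, M, g, K$ to take a specific power-of-$t$ form dictated by scaling.

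First I would verify that $f$ is a parameterizing function in the sense of Section \ref{sec:2}: since $\grad f$ is $0$-homogeneous and nonzero on the compact $S_1$, the function $f$ is Lipschitz on $\mathbf{R}^n$, every $t > 0$ is a regular value, and $R_t$ is a bounded open set on which $1/|\grad f|$ is bounded. Next I read off the scaling of the quantities appearing in \eqref{eq:14} and \eqref{eq:13}. Because $f$ is $1$-homogeneous, $\grad f$ is $0$-homogeneous and $\Delta f/|\grad f|^2$ is $(-1)$-homogeneous. Setting $m_0 = \min_{S_1}(\Delta f/|\grad f|^2)$ and $M_0 = \max_{S_1}(\Delta f/|\grad f|^2)$, homogeneity gives $m_0/t \le \Delta f/|\grad f|^2 \le M_0/t$ on $S_t$, so we take $m(t) = m_0/t$ and $M(t) = M_0/t$. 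Similarly, $\langle \grad(\Delta f/|\grad f|^2), \grad f/|\grad f|^2\rangle$ is $(-2)$-homogeneous (gradient of a $(-1)$-homogeneous function paired with a $0$-homogeneous vector field), and its minimum $g_0$ over $S_1$ yields $g(t) = g_0/t^2$.

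The main obstacle is choosing an extension of the unit normal that gives $K(t)=K_0/t$ rather than a constant. I would \emph{not} use the obvious $0$-homogeneous extension $\grad f/|\grad f|^2$, but instead the vector field
\[
\tilde X_t(\mathbf{x}) := \frac{f(\mathbf{x})}{t}\cdot\frac{\grad f(\mathbf{x})}{|\grad f(\mathbf{x})|^2}=\frac{1}{t}\,Y(\mathbf{x}),\qquad Y(\mathbf{x}):=f(\mathbf{x})\,\frac{\grad f(\mathbf{x})}{|\grad f(\mathbf{x})|^2},
\]
which still equals $\grad f/|\grad f|^2$ on $S_t$ (hence $\langle \tilde X_t,\partial_n\rangle = 1/|\grad f|$ there). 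Applying Lemma \ref{lem:4:proof} with this extension, after mollifying $\tilde X_t$ near the origin and observing that the resulting boundary contribution on $\partial B_\varepsilon(\mathbf{0})$ is of order $\varepsilon^{n-1}$ and vanishes as $\varepsilon\to 0$ (since $|\grad h|^2$ is continuous near $\mathbf{0}$), one obtains
\[
\int_{S_t}\frac{|\grad_{S_t} h|^2 - h_n^2}{|\grad f|}\,\sigma_t
  \,=\,\frac{1}{t}\int_{R_t}|\grad h|^2\!\left[\dv Y - 2\!\left\langle\nabla_{\grad h/|\grad h|}Y,\tfrac{\grad h}{|\grad h|}\right\rangle\right]\!\vol.
\]
Since $Y$ is $1$-homogeneous, the bracketed expression is $0$-homogeneous in $\mathbf{x}$ for every fixed unit direction, so continuity plus compactness of $S_1$ produce a constant $C_0>0$ (depending only on $f$) bounding it below by $-C_0$. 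This identifies $K(t) = C_0/t$.

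Finally, I plug $m,M,g,K$ into \eqref{eq:6}. One has $K(t) + M(t) = (C_0 + M_0)/t$, which I may assume positive after enlarging $C_0$ if necessary (enlarging $C_0$ only weakens the H\"ormander bound and hence preserves its validity); set $A := C_0 + M_0 > 0$. The right-hand side of \eqref{eq:6} becomes $g(t) + m(t)(M(t)+K(t)) = (g_0 + m_0 A)/t^2$, which I rewrite as $-B/t^2$ for some $B > 0$ (again enlarging $B$ if necessary only weakens the inequality). Multiplying $(\log H)'' + (A/t)(\log H)' \ge -B/t^2$ through by $H(t)^2 > 0$ yields exactly \eqref{eq:1}, with $A,B$ depending only on $f$ as required.
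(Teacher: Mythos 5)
Your proposal is correct and follows essentially the same route as the paper's proof: exploit the scaling $f_{x_i}(t\mathbf{x})=f_{x_i}(\mathbf{x})$, $f_{x_ix_j}(t\mathbf{x})=t^{-1}f_{x_ix_j}(\mathbf{x})$, etc., to read off $m(t)=C_1/t$, $M(t)=C_2/t$, $g(t)=C_3/t^2$ from compactness of $S^{n-1}$, and then choose the $1$-homogeneous extension $f(\mathbf{x})\,\grad f/|\grad f|^2$ (scaled by $1/t_0$) of the unit normal in Lemma \ref{lem:4:proof} so that the resulting $K$ is also $O(1/t)$. That extension is exactly the one the paper uses, and the identification of $A$ and $B$ from Inequality \eqref{eq:6} matches. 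The one place where you add something the paper silently skips is the care at the origin: the extension vector field is only Lipschitz (not $C^1$) at $\mathbf{0}$, and your excision/mollification argument with the boundary flux of order $\varepsilon^{n}$ is a valid and welcome patch of that gap.
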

\begin{proof}
   To apply Theorem \ref{thm:1} we will use the fact that the derivative of a $k$-homogeneous function is a $(k-1)$-homogeneous function. Thus the derivatives of $f$ satisfy $f_{x_i}\left( t\mathbf{x} \right)=f_{x_i}\left( \mathbf{x} \right)$, $f_{x_ix_j}\left( t\mathbf{x} \right)=\frac{1}{t}f_{x_ix_j}\left( \mathbf{x} \right)$ and $f_{x_ix_jx_k}\left( t\mathbf{x} \right)=\frac{1}{t^{2}}f_{x_ix_jx_k}\left( \mathbf{x} \right).$ Note also that all $k$-homogeneous functions are uniquely determined by their restrictions to the unit sphere $S^{n-1}\subset \mathbf{R}^n$. This implies that the estimates we need to satisfy in Theorem \ref{thm:1} are given by taking the minimum or maximum of the derivatives over $S^{n-1}$. 
Hence we can take $m(t)=C_1/t$, $M(t)=C_2/t$ and $g(t)=C_3/t^{2}$ for some constants $C_1, \, C_2$ and $C_3$. 

Fix $t_0 > 0$. To find $K\left( t \right)$ we extend the normal vector field on $S_{t_0}$ to the inside $\textrm{Inn}(S_{t_0})$ by $\partial_n=\frac{f\grad f}{t_0\left|\grad f\right|}$. Then by Equation \eqref{eq:3} we obtain
\begin{align*}
   \int_{S_{t_0}}&\frac{\left( \left|\grad_{S_{t_0}}h\right|^{2}-h_{n}^{2} \right)}{\left|\grad f\right|}\sigma_{t_0}\\
   &=\int_{R_{t_0}}|\grad h|^{2}\dv\left( \frac{f\grad f}{t_0\left|\grad f\right|^{2}} \right)-2\left\langle \nabla_{\grad h}\left( \frac{f\grad f}{t_0\left|\grad f\right|^{2}}\right),\grad h\right\rangle \vol\\
   &\ge \frac{-C_4}{t_0}\int_{R_{t}}\left|\grad h\right|^{2}\vol,
\end{align*} 
for some constant $C_4$ where the last inequality follows from the components of $\frac{f\grad f}{\left|\grad f\right|}$ being $1$-homogeneous in each component. 
Using Theorem \ref{thm:1} we obtain Inequality \eqref{eq:1} with $A=C_2+C_4$ and $B=C_3+C_1C_2+C_1C_4.$
\end{proof}
Note that if $h$ is a homogeneous harmonic functions of degree $k$ then $H$ becomes an $\left( n-1+2k \right)$ homogeneous function. Thus $H\left( t \right)=t^{n-1+2k}H\left( 1 \right)$. Hence 
\begin{align*}
\frac{H''\left( t \right)H\left( t \right)-H'\left( t \right)^{2}+\frac{A}{t}H'\left( t \right)H\left( t \right)}{H\left( t \right)^{2}}&=\left( -\left( n-1+2k \right)+A\left( n-1+2k \right) \right)/t^{2}\\
&=\left( A-1 \right)\left( n-1+2k \right)\ge -B/t^{2}.
\end{align*}
Since this holds for all $t$ we have that $A\ge1.$

We can integrate the inequality in Proposition \ref{prop:2} and get a convexity property for $H.$ Doing this we get the following corollary.
\begin{corollary}
   \label{prop:3}
   \begin{enumerate}
      \item When we have that $A > 1,$ $H$ satisfies $$H\left( t_1 \right)\le \left( \frac{t_0}{t_1} \right)^{\alpha \frac{B}{A-1}}\left( \frac{t_2}{t_1} \right)^{\left( 1-\alpha \right) \frac{B}{A-1}}H\left( t_0 \right)^{\alpha}H\left( t_2 \right)^{1-\alpha},$$
where 
   $$\left( 1-\alpha \right)\left( \frac{t_1}{t_2} \right)^{A-1}+\alpha\left( \frac{t_1}{t_0} \right)^{A-1}=1,\quad t_0\le t_1\le t_2.$$
   In this case, the function $$N_H\left( t \right):=t^{A-1}\left( \frac{tH'\left( t \right)}{H\left( t \right)}+\frac{B}{A-1} \right)$$ is increasing.
\item When $A=1$ we have that
   $$H\left( t_1 \right)\le \exp\left(-\frac{B}{2}\log\left( \frac{t_0}{t_1} \right)\log\left( \frac{t_2}{t_1} \right)\right)H\left( t_0 \right)^{\alpha}H\left( t_2 \right)^{1-\alpha},$$
   where
   $$\left( 1-\alpha \right)\log\left( \frac{t_1}{t_2} \right)+\alpha\log\left( \frac{t_1}{t_0} \right)=0.$$
   In this case, the function $$N_{H}(t):=\frac{tH'\left( t \right)}{H\left( t \right)}+B\log\left( t \right)$$ is increasing. 
   \end{enumerate}
\end{corollary}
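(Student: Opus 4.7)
The plan is to rewrite the inequality in Proposition \ref{prop:2} in the form
\[
(\log H(t))'' + \frac{A}{t}(\log H(t))' \ge -\frac{B}{t^{2}},
\]
and then in each case introduce an auxiliary function together with a change of variable that absorbs the inhomogeneous term $-B/t^{2}$, reducing the statement to the standard convexity of a convex function. The monotonicity of the corresponding $N_{H}$ will fall out of the same manipulation as a first-integral observation.

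For case (1) with $A>1$ the plan is to set $\tilde H(t):= t^{B/(A-1)}H(t)$. The exponent is chosen so that a direct computation yields
\[
(\log \tilde H)''(t) + \frac{A}{t}(\log \tilde H)'(t) \ge 0,
\]
the term $-B/t^{2}$ being cancelled exactly by the contribution from $t^{B/(A-1)}$. Multiplying by the integrating factor $t^{A}$ gives $(t^{A}(\log \tilde H)')' \ge 0$, and the quantity $t^{A}(\log \tilde H)'(t)$ simplifies to $N_{H}(t)$, proving monotonicity. For the convexity bound I would perform the change of variable $s=-t^{1-A}/(A-1)$, which is strictly increasing on $(0,\infty)$ with $dt/ds=t^{A}$. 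A chain-rule calculation then gives
\[
\frac{d^{2}}{ds^{2}}\log \tilde H = t^{2A}\Bigl((\log \tilde H)''(t)+\frac{A}{t}(\log \tilde H)'(t)\Bigr) \ge 0,
\]
so $s\mapsto \log \tilde H$ is convex. Writing $s_{i}$ for the image of $t_{i}$, the convex-combination relation $s_{1}=\alpha s_{0}+(1-\alpha)s_{2}$ unwinds to exactly the stated constraint $\alpha(t_{1}/t_{0})^{A-1}+(1-\alpha)(t_{1}/t_{2})^{A-1}=1$, and splitting $\log \tilde H = \log H + (B/(A-1))\log t$ produces the claimed inequality.

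In case (2) with $A=1$ the natural change of variable is $s=\log t$. Setting $\tilde L(s):=\log H(e^{s})$, the identities $\tilde L'(s)=tH'(t)/H(t)$ and $\tilde L''(s)=t^{2}(\log H)''(t)+t(\log H)'(t)$ reduce the differential inequality to $\tilde L''(s)\ge -B$. Hence $\tilde L(s)+Bs^{2}/2$ is convex in $s$, and integrating once yields that $\tilde L'(s)+Bs=N_{H}(t)$ is increasing. Applying convexity at $s_{1}=\alpha s_{0}+(1-\alpha)s_{2}$ produces a correction term
\[
\frac{B}{2}\bigl(\alpha s_{0}^{2}+(1-\alpha)s_{2}^{2}-s_{1}^{2}\bigr)=\frac{B}{2}\alpha(1-\alpha)(s_{2}-s_{0})^{2}=\frac{B}{2}(s_{1}-s_{0})(s_{2}-s_{1}),
\]
which, rewritten in terms of $t_{i}$, becomes $-\tfrac{B}{2}\log(t_{0}/t_{1})\log(t_{2}/t_{1})$, matching the statement.

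I do not expect a serious obstacle: the real content is guessing the correct auxiliary function that absorbs the inhomogeneity—a power of $t$ in case (1) and a quadratic in $\log t$ in case (2)—after which the rest is the chain rule and careful bookkeeping between the $t$-variable and the $s$-variable, together with the elementary identity $\alpha s_{0}^{2}+(1-\alpha)s_{2}^{2}-s_{1}^{2}=\alpha(1-\alpha)(s_{2}-s_{0})^{2}$ used above.
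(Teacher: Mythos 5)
Your proposal is correct and takes essentially the same approach as the paper: both define the auxiliary function $\tilde H(t)=t^{B/(A-1)}H(t)$ (the paper calls it $G$), both derive monotonicity of $N_H$ via the integrating factor $t^A$, and both handle $A=1$ with $s=\log t$ and the quadratic offset $Bs^2/2$. The only cosmetic difference is in case (1), where you make the change of variable $s=-t^{1-A}/(A-1)$ explicit so that the geometric convexity becomes ordinary convexity of $\log\tilde H$ in $s$, whereas the paper carries out the equivalent computation by integrating the increasing function $\gamma(t)t^{-A}$ over $[t_0,t_1]$ and $[t_1,t_2]$ directly.
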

\begin{proof}
Assume first that $A>1$. 
By using the integrating factor $t^{A}$ inequality \eqref{eq:1} becomes
\begin{align}\label{eq:2}
   \left( t^A\left( \log H\left( t \right) \right)' +\frac{B}{A-1}t^{A-1}\right)'\ge 0.
\end{align}
Hence the function $$N_H\left( t \right)=t^{A-1}\left( \frac{tH'\left( t \right)}{H\left( t \right)}+\frac{B}{A-1} \right)$$ is increasing. Define $$G\left( t \right)=t^{\frac{B}{A-1}}H\left( t \right).$$ Then $t^{A}\left( \log\left( G\left( t \right) \right) \right)'=\gamma\left( t \right)$ is an increasing function and 
$$\log G\left( t_1 \right)-\log G\left( t_0 \right)=\int_{t_0}^{t_1}\gamma\left( t \right)t^{-A}dt\le \gamma\left( t_1 \right)\frac{t_{0}^{1-A}-t_1^{1-A}}{A-1}.$$
Similarly, 
$$\log G\left( t_2 \right)-\log G\left( t_1 \right)\ge \gamma\left( t_1 \right)\frac{t_{1}^{1-A}-t_2^{1-A}}{A-1}.$$
We also know that $$\alpha\left( t_{0}^{1-A}-t_{1}^{1-A} \right)=\left( 1-\alpha \right)\left( t_{1}^{1-A}-t_{2}^{1-A} \right).$$ This implies the required inequality $G\left( t_{1} \right)\le G\left( t_0 \right)^{\alpha}G\left( t_2 \right)^{1-\alpha}.$


Whenever $A=1$ we obtain through similar computations that $$\left( \log \left( H\left( e^{t} \right) \right)+\frac{t^{2}}{2}B \right)''\ge 0.$$ Since $H\left( e^{t} \right)+\frac{t^{2}}{2}B$ is convex we get 
\begin{align*}
   e^{B\left( \log t_1 \right)^{2}/2}H\left( t_{1} \right)\le  e^{\alpha B\left( \log t_0 \right)^{2}/2+\left(1- \alpha \right) B\left( \log t_0 \right)^{2}/2}H\left( t_{0} \right)^{\alpha}H\left( t_2 \right)^{1-\alpha}.
\end{align*}
Using that $$\left( 1-\alpha \right)\log\left( t_2 \right)=\log\left( t_1 \right)-\alpha\log\left( t_0 \right)$$ and $$\alpha\log\left( t_2 \right)=\log\left( t_1 \right)-\left(1- \alpha \right)\log\left( t_2 \right)$$
we get that 
   $$H\left( t_1 \right)\le \exp\left(-\frac{B}{2}\log\left( \frac{t_0}{t_1} \right)\log\left( \frac{t_2}{t_1} \right)\right)H\left( t_0 \right)^{\alpha}H\left( t_2 \right)^{1-\alpha}.$$
In this case, the function $N_H\left( t \right)=\frac{tH'\left( t \right)}{H\left( t \right)}+B\log\left( t \right)$ is increasing.
\end{proof}

\subsubsection{Ellipsoids with Constant Eccentricity}\label{ell}
We will now specialize to the case of ellipsoids with constant eccentricity. Define the dilation matrix 
$$D=
\begin{bmatrix}
   a_1&0&\dots& 0\\
   0&a_2&\dots&0\\
   \vdots&&\ddots&0\\
   0&\dots&0&a_n
\end{bmatrix},$$
where $0<a_1\le a_2\le\dots\le a_n$. The function $f\left( \mathbf{x} \right)=\left|D^{-1}\mathbf{x}\right|$ is $1$-homogeneous and its level surfaces are ellipsoids in $\mathbf{R}^n$ centered at the origin. We wish to illustrate Proposition \ref{prop:2} and will hence need to find the values $A$ and $B$ in Proposition \ref{prop:2} explicitly. To find $A$ and $B$ we will use Theorem \ref{thm:1}.

A straightforward calculation gives the gradient, Hessian and Laplacian of $f$ as $$\grad f=\frac{D^{-2}\mathbf{x}}{f\left( \mathbf{x} \right)},\qquad \nabla^{2}f=\frac{D^{-2}}{f\left( \mathbf{x} \right)}-\frac{\left( D^{-2}\mathbf{x} \right)\left( D^{-2}\mathbf{x} \right)^{T}}{f^{3}\left( \mathbf{x} \right)}, \qquad \Delta f=\frac{\tr D^{-2}}{f\left( \mathbf{x} \right)}-\frac{\left|D^{-2}\mathbf{x}\right|^{2}}{f^{3}\left( \mathbf{x} \right)}.$$ Hence we obtain the estimates $$\frac{a_1^{2}\tr\left(  D^{-2} \right)-1}{f\left( \mathbf{x} \right)}\le\frac{\Delta f}{\left|\grad f\right|^{2}}\le \frac{a_n^{2}\tr\left(  D^{-2} \right)-1}{f\left( \mathbf{x} \right)}.$$ We may now set $M\left( t \right)=\frac{a_n^{2}\tr\left(  D^{-2} \right)-1}{t}$ and $m\left( t \right)=\frac{a_1^{2}\tr\left(  D^{-2} \right)-1}{t}$, such that $M$ and $m$ are the functions in Equation \eqref{eq:14}.

To find a candidate for $g$ in Equation \eqref{eq:13} we compute that
\begin{align*}
   \left\langle\grad \frac{\Delta f}{\left|\grad f\right|^{2}},\frac{\grad f}{\left|\grad f\right|^{2}} \right\rangle&=\frac{1}{f^{2}\left( \mathbf{x} \right)}+\frac{\tr\left( D^{-2} \right)}{\left|D^{-2}\mathbf{x}\right|^{2}}-\frac{2\tr\left( D^{-2} \right)f^{2}\left( \mathbf{x} \right)\left|D^{-3}\mathbf{x}\right|^{2}}{\left|D^{-2}\mathbf{x}\right|^{6}}\\
&\ge \frac{1+a_{1}^{2}\tr\left( D^{-2} \right)-2\left( a_n^{4}/a_1^{2} \right)\tr\left( D^{-2} \right)}{f^{2}\left( \mathbf{x} \right)}.
\end{align*}
This allows us to set $$g\left( t \right)=\frac{1+a_{1}^{2}\tr\left( D^{-2} \right)-2\left( a_n^{4}/a_1^{2} \right)\tr\left( D^{-2} \right)}{t^{2}}.$$

Next we want to use Lemma \ref{lem:4:proof} to find the function $K$. Fix $t_0$ and extend the unit normal of the ellipsoid $S_{t_{0}}$ to the inside of $S_{t_{0}}$ by $\partial_n=\frac{f\left( \mathbf{x} \right)\grad f}{t_{0}\left|\grad f\right|}$. Then 
\begin{align*}
   \dv\left( \frac{f\left( \mathbf{x} \right)\grad f}{t_0\left|\grad f\right|^{2}} \right)&=\frac{1}{t_{0}}\left( 2-\frac{2f^{2}\left( \mathbf{x} \right)\left|D^{-3}\mathbf{x}\right|^{2}}{\left|D^{-2}\mathbf{x}\right|^{4}}+\frac{ \tr\left( D^{-2} \right)f^{2}\left( \mathbf{x} \right) }{\left|D^{-2}\mathbf{x}\right|^{2}} \right)\\
&\ge \frac{1}{t_0}\left( 2-2\frac{a_n^2}{a_1^2}+a_1^{2}\tr\left( D^{-2} \right) \right).
\end{align*}
Furthermore, we have that 
\begin{align*}
   & \left\langle \nabla_{\grad h}\left( \frac{f\left( \mathbf{x} \right)\grad f}{t_0\left|\grad f\right|^{2}} \right),\grad h\right\rangle\\
   &\quad\quad= \frac{\left|\grad h\right|^{2}}{t_0}\bigg(\frac{\langle \grad f,\frac{\grad h}{|\grad h|}\rangle^{2}}{\left|\grad f\right|^{2}}-\frac{2f\left( \mathbf{x} \right)\nabla^{2}f\left( \frac{\grad h}{\left|\grad h\right|}, \frac{\grad f}{\left|\grad f\right|}\right)\left\langle \frac{\grad h}{\left|\grad h\right|}, \frac{\grad f}{\left|\grad f\right|}\right\rangle }{\left|\grad f\right|^{2}}\\
  &\quad \quad \quad+\frac{f\left( \mathbf{x} \right)}{\left|\grad f\right|^{2}}\nabla^{2}f\left( \frac{\grad h}{\left|\grad h\right|},\frac{\grad h}{\left|\grad h\right|} \right)\bigg)\\
  &\qquad=\frac{\left|\grad h\right|^{2}}{t_0}\bigg(2\left\langle \frac{\grad h}{\left|\grad h\right|},\frac{\grad f}{\left|\grad f\right|}\right\rangle \left\langle \frac{\grad f}{\left|\grad f\right|},\left( I-\frac{D^{-2}}{\left|\grad f\right|^{2}} \right)\frac{\grad h}{\left|\grad h\right|}\right\rangle\\
  &\qquad \quad+\frac{\left\langle \frac{\grad h}{\left|\grad h\right|},D^{-2}\frac{\grad h}{\left|\grad h\right|}\right\rangle}{\left|\grad f\right|^{2}}\bigg) \\
  &\quad\quad\le \frac{\left|\grad h\right|^{2}}{t_0}\left(2+ \frac{a_n^{2}}{a_1^2}-\frac{2a_1^2}{a_n^2} \right).
\end{align*}
Hence using Lemma \ref{lem:4:proof} we get that
\begin{align*}
   \int_{S_t}\frac{\left|\grad_{S_t}h\right|^{2}-h_n^{2}}{\left|\grad f\right|}\sigma_t&\ge -\frac{1}{f\left( \mathbf{x} \right)} \left(2+4\left(\frac{a_n^{2}}{a_1^{2}}-\frac{a_{1}^{2}}{a_n^{2}}  \right)-a_{1}^{2} \tr \left( D^{-2} \right)\right)\int_{S_t}h_nh\sigma_t\\
   &=-K\left( t \right)\int_{S_{t}}hh_n\sigma_t.
\end{align*}
Using Theorem \ref{thm:1} we find the explicit values $$A=1+4\left( \frac{a_n^{2}}{a_1^{2}}-\frac{a_1^{2}}{a_n^{2}} \right)+\left( a_n^{2}-a_1^{2} \right)\tr\left( D^{-2} \right)$$
and 
$$B=4\left( \frac{a_1^2}{a_n^2}-\frac{a_n^2}{a_1^2} \right)+\left(3a_1^2+3a_n^2-2\frac{a_n^{4}}{a_1^{2}} -4\frac{a_1^4}{a_n^2} \right)\tr\left( D^{-2} \right)+a_{1}^{2}\left( a_{n}^{2}-a_1^{2} \right)\tr\left( D^{-2} \right)^{2}.$$
Note that $A=1$ if and only if $f\left( \mathbf{x} \right)=c\left|\mathbf{x}\right|$. In this case, we are integrating over spheres. In all other cases we have $A>1.$

Let $h$ be a harmonic function. Define $h\left( D\mathbf{y} \right)=v\left( \mathbf{y} \right)$. Then $\dv\left(D^{-2}\grad v  \right)=0.$ By using change 
\begin{align*}
   H\left( t \right)&=\int_{S_t}h^{2}\left( \mathbf{x} \right)\frac{\left|D^{-2}\mathbf{x}\right|}{\left|D^{-1}\mathbf{x}\right|}\sigma_t\\
   &=\int_{D^{-1}\left( S_{t} \right)}v^{2}\left( \mathbf{y} \right)\frac{\langle D^{-2}\mathbf{y},\mathbf{y}\rangle}{\left|\mathbf{y}\right|^{2}}\tilde{\sigma_t},
\end{align*}
where $D^{-1}\left( S_{t} \right)$ is the sphere with radius $t$ and $\tilde{\sigma_t}$ is the spherical measure. This is the same measure as was considered in \cite{Ga86}, however they only considered the case the when $\dv\left( A\left( \mathbf{x} \right)\grad u \right)=0$ when $A\left( \mathbf{0} \right)=I$.

\subsection{Example of the distance function of $S^{k}\subset \mathbf{R}^n$}
Let $k<n$ and  $$S^{k}=\left\{ \left( x_1,\dots,x_n \right):\,x_1^{2}+\cdots+x_{k+1}^{2}=1,\, x_{k+2}=\cdots=x_n=0 \right\}.$$ Then the distance from a point $\mathbf{x} \in \mathbf{R}^n$ to the surfaces $S^{k}$ is given by $$f\left( \mathbf{x} \right)=\sqrt{\left( r_{k+1}\left( \mathbf{x} \right)- 1\right)^{2}+x_{k+2}^{2}+\cdots+x_{n}^{2}},$$ where $$r_{k+1}\left( \mathbf{x} \right)=\sqrt{x_1^2+\cdots+x_{k+1}^{2}}.$$
This is a special case of \textit{Fermi coordinates}, see \cite{Ch06}, where the submanifold is $S^{k}$. In the case when $k=0$ the set $S^k$ consists only of two points. In this case, the function $f\left( \mathbf{x} \right)$ is the usual distance function from $\mathbf{x}$ to the nearest of the two points in $S^{0}$. When $k=1$ and $n=3$ the level surfaces $S_{t} = f^{-1}(t)$ for small $t$ are tori. \par
Note that $f$ is not smooth along the set of points $$\left\{\left( 0,\dots,0,x_{k+2},\dots,x_n \right) \, : \, x_{k+2}, \dots, x_n \in \mathbf{R}^n \right\}.$$ Hence we will only consider values in the range of $f$ in $\left[ 0,1-\varepsilon \right)$ for some $0 < \epsilon < 1$. Let $h:f^{-1}\left( \left[ 0,1-\varepsilon \right) \right) \to \mathbf{R}$ be a harmonic function and consider $H\left( t \right)=\int_{S_{t}}h^{2}\left( \mathbf{x} \right)\sigma_t,$ where $S_{t}=f^{-1}\left( t \right)$. Again, we wish to apply Theorem \ref{thm:1}.

The gradient of $f$ is given by $$\grad f =\frac{\mathbf{x}}{f\left( \mathbf{x} \right)}-\frac{\grad r_{k+1}}{f\left( \mathbf{x} \right)}.$$ It follows from a computation that $\left|\grad f\right|=1$ and the Laplacian of $f$ is given by $$\Delta f\left( \mathbf{x} \right)=\frac{n-1}{f\left( \mathbf{x} \right)}-\frac{k/r_{k+1}\left( \mathbf{x} \right)}{f\left( \mathbf{x} \right)}.$$ 
We can similarly compute the gradient of the Laplacian and we find that $$\langle \grad \Delta f, \grad f\rangle=\frac{-\left( n-1 \right)+2k/r_{k+1}\left( \mathbf{x} \right)-k/r_{k+1}\left( \mathbf{x} \right)^{2}}{f^{2}\left( \mathbf{x} \right)}.$$
Assume that $0 < t_0 < 1 - \epsilon$ is fixed. Let $\partial_n=\frac{f\left( \mathbf{x} \right)\grad f}{t_0}$ be the extension of $\partial_n$ to $\textrm{Inn}(S_{t_0})$. If $\mathbf{e}$ is a unit vector then 
\begin{align*}
   \dv\left( \partial_n \right)-2\langle \nabla_{\mathbf{e}}\partial_n,\mathbf{e}\rangle&=\frac{n-k/r_{k+1}\left( \mathbf{x} \right)+2\nabla^{2}r_{k+1}\left(\mathbf{e},\mathbf{e}  \right)-2}{t_0}\\
   &\ge \frac{n-2-k/\varepsilon}{t_0}. 
\end{align*}

In short, if we assume that $\varepsilon<r_{k+1}<2-\varepsilon$ we obtain the expressions $$m\left( t \right)=\frac{n-1-k/\varepsilon}{t}, \qquad M\left( t \right)=\frac{n-1-k/\left( 2-\varepsilon \right)}{t},$$ $$g\left( t \right)=\frac{-\left( n-1 \right)+2k/\left( 2-\varepsilon \right)-k/\varepsilon^{2}}{t^{2}}, \qquad K\left( t \right)=\frac{k/\varepsilon-(n-2)}{t}.$$
Setting $$C=1+k\left( \frac{1}{\varepsilon}-\frac{1}{2-\varepsilon}\right)$$ and $$B=k\left( \frac{\left( 2n-3 \right)(1-\varepsilon)}{\varepsilon\left( 2-\varepsilon \right)} -\frac{2}{\varepsilon^{2}}\right)$$ we get that $H$ satisfy the convexity property 
\begin{align*}
   \frac{H''\left( t \right)H\left( t \right)-H'\left( t \right)^{2}+\frac{C}{t}H\left( t \right)H'\left( t \right)  }{H\left( t \right)^{2}}\ge \frac{B}{t^{2}}.
\end{align*}
Note that this is again an equation on the same form as \eqref{eq:1}. Hence we get the convexity inequality for $H$ given by Corollary \ref{prop:3}. 

\subsection{Non-Positive Eigenvalues of $-\Delta$}\label{sec:eig}
Let $\left( \mathcal{M},\mathbf{g} \right)$ be a non-compact $n$ dimensional Riemannian manifold and sectional curvature bounded by $$\kappa\left|X\right|^{2}\le \Sec\left( X,X \right)\le \mathcal{K}\left|X\right|^{2},$$ where $X$ is a vector field and $\kappa$ and $\mathcal{K}$ are constants. Assume that $u:\mathcal{M}\to \mathbf{R}$ is a solution $\Delta u-k^{2}u=0$. We will show that the spherical $L^2$-norm of $u$ satisfies a convexity property similarly to harmonic functions. \par 
Denote by $S_{1/k} \subset \mathbf{R}^2$ the circle with radius $1/k$. Let $Y\left( \theta \right)$ be the normalized first eigenfunction for $S_{1/k}$ with eigenvalue $-k^{2}$. Extend the function $u\left( x \right)$ to a harmonic function on $\mathcal{M}\times S_{1/k}$ by $h\left( x,\theta \right)=u\left( x \right)Y\left( \theta \right)$, and denote by $\mathcal{H}_{k}$ all harmonic functions created this way. Define the function $f\left( x,\theta \right)=r_{\mathcal{M}}\left( x \right)$, where $r_{\mathcal{M}}$ is the radial distance corresponding to a fixed point $p \in \mathcal{M}$. Then $$S_{t}=f^{-1}\left( t \right)=r_{\mathcal{M}}^{-1}\left( t \right)\times S_{1/k}.$$ In this case, we obtain $$H\left( t \right)=\int_{S_{t}}Y^{2}\left( \theta \right)u^{2}\left( x \right)\sigma_{t}=\int_{r_{\mathcal{M}}^{-1}\left( t \right)\bigcap \mathcal{M}}u^{2}\left( x \right)\tilde{\sigma_{t}},$$ where $\tilde{\sigma_{t}}$ is the measure on the geodesic sphere on $\mathcal{M}$. \par 
Note that we can use Theorem \ref{thm:1} on $\mathcal{H}_{k}$ with the function $f$ as described above. Since $f$ does not depend on $\theta$, to find $m,\,M$ and $g$ we can use Rauch Comparison Theorem on $r_\mathcal{M}$. Using the argumentation found in Section \ref{sec:sphere} we get that $$m\left( t \right)=\left( n-1 \right)\cot_{\mathcal{K}}\left( t \right)\le \Delta f=\Delta r_{\mathcal{M}}\left( x \right)\le \left( n-1 \right)\cot_{\kappa}\left( t \right)=M\left( t \right)$$ and $$\langle \grad \Delta f, \grad f\rangle=\langle \grad \Delta r_{\mathcal{M}}, \grad r_{\mathcal{M}}\rangle\ge g\left( t \right)=-\left( n-1 \right)\cot_{\kappa}^{2}\left( t \right)-\left( n-1 \right)\mathcal{K}.$$ Fix $t_0 > 0$ and let $\partial_n=\frac{\sin_{\mathcal{K}}\left( r\left( x \right) \right)\grad r_{\mathcal{M}}}{\sin_{\mathcal{K}}\left( t_0 \right)}$. Denote by $R_t=f^{-1}\left( \left[ 0,t \right) \right)$, then we have that 
   \begin{align*}
      \int_{S_{t}}&\left( |\grad_{S_t} h|^{2}-h_n^{2} \right)\sigma_{t}=\frac{1}{\sin_{\mathcal{K}}\left( t \right)} \int_{R_{t}}\big(|\grad h|^{2}\left( \sin_{\mathcal{K}}\left( f\left( x \right) \right)\Delta r_{\mathcal{M}}+\cos_{\mathcal{K}}\left( f\left( x \right) \right) \right)\\ 
      &\quad\quad\quad\quad\quad\quad\quad-2\left( \sin_{\mathcal{K}}\left( f\left( x \right) \right)\nabla^{2}r_{\mathcal{M}}\left( \grad h,\grad h \right) +\cos_{\mathcal{K}}\left( f\left( x \right) \right)h_{n}^{2} \right)\big)\vol\\
      &\quad\quad\quad\quad\quad\quad\ge \left( n\cot_{\mathcal{K}}\left( t \right)-2\cot_{\kappa}\left( t \right) \right)D\left( t \right)+\frac{n-2}{2}\min\left( 0,\mathcal{K} \right)H\left( t \right),
   \end{align*}
   by using the same argument as in Section \ref{sec:sphere} and that $h\in \mathcal{H}_k$.
   Using Theorem \ref{thm:1} on the family $\mathcal{H}_k$ we get that $H$ satisfies 
\begin{align*}
   \left( \log H\left( t \right) \right)''&+\left( \cot_{\mathcal{K}}\left( t \right)+\left( n+1 \right)\left( \cot_{\kappa}\left( t \right)-\cot_{\mathcal{K}}\left( t \right) \right) \right)\left( \log H\left( t \right) \right)'\\
   &\ge  -\left( n-1 \right)\mathcal{K}+\left( n-2 \right)\min\left( \mathcal{K},0 \right)-\left( n-1 \right)\left( \mathcal{K}-\kappa \right).
\end{align*}
In short, solutions to $\Delta u=k^{2}u$ satisfy the same convexity estimate as harmonic functions.

\bibliographystyle{my_alpha}
\bibliography{bibliography}
\end{document}